\documentclass[11pt]{article}

\usepackage[utf8]{inputenc}
\usepackage{amsmath,amssymb,amsthm,mathtools,mathrsfs}
\usepackage[margin=1in]{geometry}
\usepackage{microtype}
\usepackage{graphicx}
\usepackage{array,booktabs,calc,longtable}
\usepackage{enumitem}
\usepackage{fvextra}
\usepackage[numbers,sort&compress]{natbib}
\usepackage{xurl}
\usepackage[hidelinks,pdfusetitle]{hyperref}

\allowdisplaybreaks
\newtheorem{theorem}{Theorem}[section]
\newtheorem{lemma}[theorem]{Lemma}
\newtheorem{proposition}[theorem]{Proposition}
\newtheorem{corollary}[theorem]{Corollary}
\theoremstyle{definition}

\theoremstyle{remark}

\newtheorem*{mainresult}{Main result}

\DefineVerbatimEnvironment{CertificateOutput}{Verbatim}{%
  breaklines=true,
  breakanywhere=true,
  fontsize=\footnotesize,
  frame=none,
  baselinestretch=0.96
}

\newcommand{\manuscriptauthorgiven}{Ian Meng}
\newcommand{\manuscriptauthorfamily}{Si}
\newcommand{\manuscriptauthoraffiliation}{Department of Statistics, University of California, Berkeley}
\newcommand{\manuscriptauthoremail}{ianmeng0511@berkeley.edu}

\title{Strategic geometry of competing first-passage random walks}
\author{%
  \manuscriptauthorgiven~\manuscriptauthorfamily\\
  \manuscriptauthoraffiliation\\
  \texttt{\manuscriptauthoremail}%
}
\date{}
\hypersetup{
  pdfauthor={Ian Meng Si},
  pdfsubject={Strategic first-passage competition of independent random walks},
  pdfkeywords={random walks, first passage, reflected Brownian motion, maximal lotteries, equilibrium support}
}

\begin{document}
\maketitle

\begin{abstract}
Two competitors choose starting vertices for independent, constant-speed
random walks, and each site is acquired by its first visitor. We study the
spatial geometry of the resulting first-passage location game. On every finite
path the optimal strategies are exactly the distributions supported on the
central vertices. The proof combines reflecting-boundary harmonic barriers
with a parameter-uniform aggregate estimate for product-chain exit
probabilities. After diffusive rescaling, the complete two-start payoff
landscape converges uniformly to the game between independent reflected
Brownian motions. Its unique equilibrium concentrates at the midpoint, with
explicit cubic stability. Beyond paths, attaching two leaves to every vertex of a clique of
order \(k\) produces a \(3k\)-vertex graph on which every exact optimal
strategy randomizes over all \(k\) clique vertices; for \(k=2\), this is a
six-vertex tree with no pure equilibrium. The continuum best response to an
endpoint is uniquely determined. A first-passage random-ranking
representation relates the finite game to maximal lotteries without
identifying it with nonstrategic painting, deterministic Voronoi allocation,
or absorbing-trap placement. Parameter-uniform statements follow from
analytic arguments or symbolic polynomial identities; identified finite
exceptions and numerical enclosures have reproducible certificates.
\end{abstract}

\medskip
\noindent\textbf{Keywords:} competing random walks; first-passage races;
constant-sum games; maximal lotteries; reflected Brownian motion; equilibrium
support; discrete harmonic measure.\\
\textbf{2020 Mathematics Subject Classification:} Primary 60J27, 60J65;
Secondary 91A05, 91A43.

\tableofcontents

\section*{Introduction}
\addcontentsline{toc}{section}{Introduction}

Competitive exploration becomes strategic when the competitors can choose
where their random motion begins. A resource at a site is secured by whichever
mobile agent discovers that site first, and each agent chooses its initial
location before the walks evolve. The resulting question is not simply how
much territory fixed walkers acquire: it is how the geometry of independent
first-passage processes determines strategically stable spatial positions and,
when pure positions fail, the necessary structure of randomization.

This formulation supplies a tractable probabilistic model for territorial
acquisition, competitive exploration, mobile-agent resource allocation, and
location decisions under stochastic movement. These are motivations for the
model, not claims of empirical validation. We work throughout with independent
\emph{constant-speed} continuous-time walks on finite graphs: every
nonisolated vertex, including a reflecting path endpoint or pendant leaf, has
total jump rate one.

For each target vertex, ownership is a race between two independent hitting
times and is governed by a product-chain Dirichlet problem. Aggregating those
first-passage probabilities over the graph yields a constant-sum location game
whose payoff matrix is constrained by the underlying walk. The probabilistic
constraints, rather than abstract minimax theory, determine the results. On
paths they force exact central optimality; under diffusive rescaling they
produce uniform convergence of the \emph{entire} strategic payoff landscape to
a reflected-Brownian game; and on other graphs they can destroy pure
equilibrium and require support linear in graph size.

The proof mechanisms mirror this progression. Reflecting-boundary harmonic
comparisons, strict coupled rectangle domination, and a parameter-uniform
aggregate exit estimate control the finite path. A varying-start and
varying-target first-passage limit promotes an invariance principle to uniform
strategic convergence and quantitative continuum stability. Explicit lumped
product-chain calculations then show that first-passage-induced preference
matrices can realize large \emph{exact} equilibrium support. The endpoint
optimization illustrates that the continuum payoff retains genuinely
stochastic spatial geometry; its numerical localization is a consequence of
the first-passage analysis, not the principal contribution.

\subsection*{Related work and model distinctions}

First-arrival territorial coloring is established prior work. Gomes J\'{u}nior,
Lucena, da Silva, and Hilhorst~\cite{gomes1996coloring} already studied two
independent walkers coloring a one-dimensional periodic lattice and analyzed
its scaling behavior. Miller~\cite{miller2013painting} developed the
competing-walk painting problem on graphs with nonstrategic random
initialization. Most closely in claiming rule, Chatterjee, Nabahi, and
Terlov~\cite{chatterjee2026interface} study two fixed-start independent walks
on a cycle and the expected number of interfaces between their claimed
territories. Plaud, Krapivsky, Redner, and
B\'{e}nichou~\cite{plaud2026discoveries} investigate fluctuations of discovery
shares in competitive exploration, while Baccara~\cite{baccara2026range}
studies fixed-start competitive ranges on infinite lattices. Dey, Kim, and
Terlov~\cite{dey2025collaboration} instead study collaborative exploration.
None of these distinctions makes first-arrival coloring or one-dimensional
scaling new; the additional feature here is simultaneous strategic choice of
starting positions and the analysis of the induced equilibrium problem.

Deterministic spatial games supply another important comparison. D\"urr and
Nguyen~\cite{durr2007voronoi}, and subsequently
Bandyapadhyay et al.~\cite{bandyapadhyay2014voronoi}, analyze graph Voronoi
games in which customers are allocated by nearest-facility distances.
Roshanbin~\cite{roshanbin2014diffusion} proves pure-equilibrium existence on
trees for a deterministic competitive-diffusion game. Independent
first-passage allocation is neither deterministic nearest-facility assignment
nor deterministic diffusion: indeed, the six-vertex double-star tree below
has no pure equilibrium in the present model. This contrast concerns
different allocation dynamics and is not a contradiction of Roshanbin's
theorem.

Strategic positioning for random-walk competition also appears in a different
probabilistic model.  Cencetti, Bagnoli, Di Patti, and
Fanelli~\cite{cencetti2016second} and Baldi and
Bagnoli~\cite{baldi2020intransitiveness} study competing absorbing traps.  With
initial distribution \(\nu\), their representative two-position kernel has the
form
\[
W_{\mathrm{trap}}(a,b)
=\sum_z\nu(z)\Pr_z(T_a<T_b),
\]
so that \emph{one walker} races toward two strategically placed sites.  The
present territorial kernel is
\[
W_{\mathrm{territory}}(a,b)
=\frac1{|V|}\sum_z
\Pr\!\left(T_z^{(a)}<T_z^{(b)}\right),
\]
so that \emph{two independent walkers} race toward each individual target.
Half ties are added where appropriate.  The distinction already holds on
\(P_3\), with \(a=1\), \(b=0\), and uniform initial/target mass:
\[
W_{\mathrm{trap}}(1,0)=\frac23,
\qquad
W_{\mathrm{territory}}(1,0)=\frac{13}{24}.
\]
Thus strategic trap placement is established prior work, but its payoff kernel
is not the kernel analyzed here.

At the abstract expected-payoff level, the finite game lies inside existing
location-game frameworks. N\'{u}\~nez and
Scarsini~\cite{nunez2016locations} treat competition over finitely many
locations under strict consumer rankings, and Frongillo, Hsu, Monroe, and
Thilagar~\cite{frongillo2026position} develop a broad position-optimization
framework.  The maximal-lottery interpretation is likewise established
theory, beginning with Fishburn~\cite{fishburn1984probabilistic} and developed
by Brandl, Brandt, and Seedig~\cite{brandl2016consistent}.  Our representation
is an explicit \emph{first-passage-induced embedding} into these frameworks,
not a claim to have introduced them.

Large exact supports are also known for unconstrained majority games; see,
for example, Brandl, Brandt, and
Stricker~\cite{brandl2022maximal}.  What is specific here is a natural
\(3k\)-vertex graph whose first-passage constraints force every exact
optimal strategy to use precisely \(k\) vertices.  Even support sizes cause no
conflict with odd-electorate or genericity statements in the social-choice
literature: the induced hub--hub majority margins are tied and the optimal
lottery is not unique.  Moreover, the result is exclusively about \emph{exact}
optimality.  Small-support approximate maximal lotteries exist in general
\cite{charikar2025dominating}, and a single hub is already an
\(O(k^{-2})\)-optimal normalized strategy for our example.

The resulting novelty claim is deliberately narrow:
\emph{strategic starting-position selection for independent first-arrival
territorial walks yields a graph-constrained location game with a complete
finite-path equilibrium classification, a uniform reflected-Brownian
strategic limit, and explicit graph families requiring linear exact
equilibrium support.} Neither competing-walk painting itself, general
one-dimensional scaling, deterministic graph Voronoi games, maximal lotteries,
abstract large-support majority games, nor strategic trap placement is claimed
to be new.

\subsection*{Main results}

For a finite graph \(G\), let \(U_G(a,b)\) be the expected number of vertices
won by the first player, and write
\(A_G(a,b)=U_G(a,b)-|V(G)|/2\).  Denote the set of optimal mixed strategies by
\(\mathcal O(G)\).

\begin{mainresult}[Complete path equilibria]
For every path, \(\mathcal O(P_n)\) is the set of probability measures
supported on its center.  In particular,
\[
\mathcal O(P_{2m+1})=\{\delta_m\},
\qquad
\mathcal O(P_{2m})
=\left\{p\delta_{m-1}+(1-p)\delta_m:0\leq p\leq1\right\}.
\]
The pure equilibria are respectively \(\{(m,m)\}\) and
\(\{m-1,m\}^2\), and all mixed equilibria are
\(\mathcal O(P_n)\times\mathcal O(P_n)\).  The assertions include \(P_1\) and
\(P_2\).
\end{mainresult}

\begin{mainresult}[Uniform Brownian scaling and equilibrium stability]
Let \(F(x,y)\) be the target-integrated first-arrival payoff for independent
standard reflected Brownian motions on \([0,1]\).  For paths indexed by
\(\{0,\dots,N\}\),
\[
\sup_{0\leq a,b\leq N}
\left|
\frac{U_{P_{N+1}}(a,b)}{N+1}
-F\!\left(\frac aN,\frac bN\right)
\right|\longrightarrow0.
\]
The unique optimal continuum strategy is \(\delta_{1/2}\), and
\[
F\!\left(\frac12,y\right)-\frac12
\geq\frac{|1-2y|^3}{400}.
\]
In particular, the unique continuum mixed equilibrium is
\((\delta_{1/2},\delta_{1/2})\), and the cubic bound yields explicit
concentration estimates for approximate equilibria.
\end{mainresult}

\begin{mainresult}[Graph-induced exact equilibrium support and a tree obstruction]
Let \(G_{k,2}\) be the graph formed from a \(k\)-clique by attaching two
pendant leaves to each clique vertex \(u_1,\ldots,u_k\).  For every \(k\geq2\)
and every exact optimal strategy \(\mu\),
\[
\operatorname{supp}(\mu)=\{u_1,\ldots,u_k\},
\qquad
|\operatorname{supp}(\mu)|
=k=\frac{|V(G_{k,2})|}{3}.
\]
More generally, all optimal strategies and equilibria of the clique-of-hubs
graphs \(G_{k,r}\), for all \(k\geq2\) and \(r\geq1\), are classified by
explicit rational first-passage formulas. In particular, \(G_{2,2}\) is a
six-vertex double-star tree with no pure equilibrium: every optimal strategy
assigns strictly positive probability to both hubs. Its two centered
first-passage payoff parameters are
\[
h_{2,2}=\frac{385}{1812},
\qquad
d_{2,2}=\frac{19}{755}.
\]
\end{mainresult}

\begin{mainresult}[Certified continuum endpoint response]
The function \(H(x)=F(x,0)\) has a unique global maximizer \(x_*\), with
\[
0.363424228<x_*<0.363424229,
\qquad
0.57155664213<H(x_*)<0.57155664221.
\]
Every sequence of finite-path endpoint best responses \(a_N\) satisfies
\(a_N/N\to x_*\). No uniqueness assertion for the finite optimizers and no
convergence rate are made.
\end{mainresult}

\medskip\noindent\textbf{Supporting random-ranking representation.}
For every finite connected graph, a random strict ranking \(\Pi\) of its
vertices has uniform first-ranked vertex and majority margins
\[
\Pr_{\Pi}(a\succ b)-\Pr_{\Pi}(b\succ a)
=\frac{2A_G(a,b)}{|V(G)|}.
\]
Consequently, \(\mathcal O(G)\) is the maximal-lottery set of this ranking
distribution. This structural proposition concerns two-player expected
payoffs; it neither identifies the joint painting processes nor extends
unchanged to the stated three-walker example.

\subsection*{Proof architecture and computational boundaries}

The common first-passage mechanism organizes the arguments into four dependent
parts.

\noindent\textbf{Finite paths.}
Sections~4--6 pass from the product-chain Dirichlet problem to harmonic
barriers respecting the rate-one reflecting boundary. An outer-flux
comparison and a strict folding-coupling domination reduce the interior
deficit to ordinary rectangle exits. A parameter-uniform aggregate exit
estimate, followed by the explicitly enumerated endpoint exceptions, proves
strict centrality and then both the pure and mixed equilibrium
classifications.

\noindent\textbf{Reflected-Brownian limit.}
Section~7 combines the folded invariance principle with a varying-start and
varying-target hitting-time lemma. Boundary targets, coincident limiting
starts, and target neighborhoods are handled separately. Continuity of the
integrated payoff and compactness then give uniform two-start convergence;
the finite harmonic bounds yield cubic midpoint stability and uniqueness of
the continuum mixed equilibrium.

\noindent\textbf{Graph support.}
Section~3 states the structural consequence, and Appendix~A proves the full
clique-of-hubs classification. Graph automorphisms reduce the product-chain
systems to two weighted orbit matrices. Exact symbolic identities and
positive shifted coefficients hold for every parameter; specialization to two
leaves gives linear exact support and its six-vertex tree corollary.

\noindent\textbf{Endpoint response.}
Section~8 first globally localizes every maximizer. An image expansion
regularizes the moving target diagonal; uniform exponential domination
justifies integration, differentiation, and cancellation of boundary terms.
Explicit image-tail and quadrature remainders give strict concavity and
certified opposite derivative signs, yielding uniqueness and the stated
numerical enclosures.

Appendix~B distinguishes five kinds of evidence: analytic arguments valid for
infinitely many parameters; exact symbolic polynomial identities in those
parameters; the theorem-critical finite exceptional cases; certified
outward-rounded numerical enclosures; and finite regression sweeps. Only the
first two establish parameter-uniform claims. The four executable supplements
document the explicitly identified finite exceptions and numerical
certifications without substituting finite sweeps for proofs.

\hypertarget{model-payoff-normalization-and-constant-sum-game}{%
\section{Model, payoff normalization, and constant-sum game}\label{model-payoff-normalization-and-constant-sum-game}}

Let \(G=(V,E)\) be a finite connected graph and let \(n=|V|\). At every
nonisolated vertex \(v\), the walk waits an exponential time of mean one and
then chooses a uniformly random neighbor. Its generator is

\[
(\mathcal L_G f)(v)
=\frac{1}{\deg(v)}
 \sum_{w\sim v}\bigl(f(w)-f(v)\bigr).
\tag{1.1}
\]

In particular, an endpoint of a path still has total jump rate one: its unique
neighbor is selected with probability one. For the single-vertex graph the
walk stays at that vertex. Ownership does not prevent subsequent traversal.

Given independent walks \(X^{(a)}\) and \(X^{(b)}\), define

\[
\tau_z^{(a)}=\inf\{t\geq 0:X_t^{(a)}=z\}
\]

and

\[
q_z(a,b)
=\Pr\bigl(\tau_z^{(a)}<\tau_z^{(b)}\bigr)
+\frac12\Pr\bigl(\tau_z^{(a)}=\tau_z^{(b)}\bigr).
\tag{1.2}
\]

The unnormalized expected payoff and centered payoff are

\[
U_G(a,b)=\sum_{z\in V}q_z(a,b),
\qquad
A_G(a,b)=U_G(a,b)-\frac{|V(G)|}{2}.
\tag{1.3}
\]

Co-location is permitted and gives \(q_z(a,a)=1/2\) in expectation for every
target, so \(U_G(a,a)=n/2\). Positive first-passage times have atomless laws:
conditional on the jump count of first arrival, they are sums of independent
exponential holding times. Consequently distinct starting vertices have no
positive-time ties.

For every \(a,b\in V\),

\[
U_G(a,b)+U_G(b,a)=n,
\qquad
A_G(a,b)=-A_G(b,a).
\tag{1.4}
\]

The zero-sum value of \(A_G\) is zero. Write

\[
C_G=\{a\in V:A_G(a,b)\geq 0\text{ for every }b\in V\}
\]

and

\[
\mathcal O(G)
=\left\{
\mu\in\mathcal P(V):
\sum_{a\in V}\mu(a)A_G(a,b)\geq 0
\text{ for every }b\in V
\right\}.
\tag{1.5}
\]

The minimax theorem and skew symmetry give

\[
\operatorname{PNE}(G)=C_G\times C_G,
\qquad
\operatorname{MNE}(G)=\mathcal O(G)\times\mathcal O(G).
\tag{1.6}
\]

The path \(P_{N+1}\) always has vertices \(0,1,\ldots,N\), hence \(N\) edges
and \(N+1\) vertices. Its spatial scaling is \(a/N\) when \(N\geq 1\), whereas
its normalized payoff is \(U_{P_{N+1}}/(N+1)\). These two normalizations are
not interchangeable at finite \(N\).

The first-arrival probabilities are solutions of the product-chain Dirichlet
problem. Away from either absorbing target coordinate,

\[
(\mathcal L_G^{(1)}+\mathcal L_G^{(2)})q_z=0,
\tag{1.7}
\]

with boundary values one when only the first coordinate is the target, zero
when only the second coordinate is the target, and \(1/2\) when both
coordinates coincide. Equation (1.7) retains the degree-dependent transition
probabilities of (1.1); in particular it does not replace endpoint rate one by
an artificial holding or endpoint slowdown.

\hypertarget{first-passage-random-rankings-and-their-precise-limitation}{%
\section{First-passage random rankings and their precise limitation}\label{first-passage-random-rankings-and-their-precise-limitation}}

\begin{proposition}[First-passage random-ranking representation]
\label{thm:random-ranking}
For every finite connected graph \(G=(V,E)\), there is a random strict ranking
\(\Pi\) of \(V\) whose first-ranked vertex is uniform and whose majority-margin
matrix satisfies
\[
\Pr_{\Pi}(a\succ b)-\Pr_{\Pi}(b\succ a)
=\frac{2A_G(a,b)}{|V|}.
\]
The optimal graph-game strategies are precisely the maximal lotteries of this
ranking distribution.
\end{proposition}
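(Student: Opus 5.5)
The plan is to build \(\Pi\) explicitly from the target-wise first-passage races, then read off the margins.

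\emph{Construction.} Choose a target \(Z\) uniformly from \(V\). Independently of \(Z\), run independent copies \(X^{(v)}\), \(v\in V\), of the constant-speed walk, with \(X^{(v)}\) started at \(v\). Rank the vertices by their hitting times \(\tau_Z^{(v)}\) of \(Z\), in increasing order. Only \(v=Z\) has \(\tau_Z^{(Z)}=0\), so \(Z\) is ranked first, and the first-ranked vertex is therefore uniform.

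\emph{No ties among the rest.} For \(v\neq Z\) the times \(\tau_Z^{(v)}\) are positive and almost surely finite, since \(G\) is finite and connected. As noted after (1.3), their laws are atomless. By independence, two distinct walks tie with probability zero: condition on one time and use that the other has no atoms. So \(\Pi\) is almost surely a strict ranking. On the null tie set, break ties by any fixed order.

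\emph{Margins.} Fix \(a\neq b\). The pair \((X^{(a)},X^{(b)})\) has the joint law of the two independent walks in (1.2). Conditional on \(Z=z\), the event \(a\succ b\) is \(\{\tau_z^{(a)}<\tau_z^{(b)}\}\) up to a null set, so \(\Pr(a\succ b\mid Z=z)=q_z(a,b)\); the half-tie term vanishes for \(a\neq b\). This includes \(z=a\), where \(q_a(a,b)=1\), and \(z=b\), where \(q_b(a,b)=0\). Averaging over \(z\) gives \(\Pr_\Pi(a\succ b)=U_G(a,b)/|V|\). By (1.4),
\[
\Pr_\Pi(a\succ b)-\Pr_\Pi(b\succ a)=\frac{U_G(a,b)-U_G(b,a)}{|V|}=\frac{2U_G(a,b)-|V|}{|V|}=\frac{2A_G(a,b)}{|V|}.
\]
For \(a=b\), both sides are zero since \(A_G(a,a)=0\).

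\emph{Maximal lotteries.} A maximal lottery of a ranking distribution with margin matrix \(M\) is a \(\mu\) satisfying \(\sum_a\mu(a)M(a,b)\geq0\) for all \(b\). Here \(M=2A_G/|V|\) is a positive multiple of \(A_G\). This condition is therefore exactly the defining inequality in (1.5), and the maximal-lottery set equals \(\mathcal O(G)\).

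The main obstacle is the tie and coupling bookkeeping. The walks must be coupled consistently across all targets so that \(\Pi\) is a single random order, while each pairwise marginal still reproduces \(q_z\). The device that handles this is using one walk per starting vertex together with an independent target.
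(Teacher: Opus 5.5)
Your proposal is correct and follows essentially the same construction as the paper. The paper also takes a uniform target $Z$, attaches conditionally independent hitting times $\tau_Z^{(v)}$ to every vertex, and ranks by increasing time, so that $Z$ is first and each pairwise comparison reproduces $q_z(a,b)$ with no ties. You realize these times through one independent walk per starting vertex instead of sampling them directly given $Z=z$; this gives the same conditional law, and your identification of maximal lotteries with (1.5) via $M=2A_G/|V|$ matches the paper's conclusion.
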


\begin{proof}
Choose \(Z\) uniformly from \(V\). Conditional on \(Z=z\), independently sample
for every alternative \(x\in V\) a variable

\[
T_x^{(z)}\stackrel{\mathrm d}{=}\tau_z^{(x)},
\qquad
T_z^{(z)}=0.
\tag{2.1}
\]

The variables with \(x\neq z\) are positive and atomless, so they almost surely
induce a strict ranking \(\Pi\) by increasing sampled time. The first-ranked
alternative is exactly \(z\). Therefore

\[
\Pr\bigl(\Pi(1)=z\bigr)=\frac1{|V|}.
\tag{2.2}
\]

For distinct \(a,b\), conditional independence and (1.2) give

\[
\Pr_\Pi(a\succ b)
=\frac1{|V|}\sum_{z\in V}
  \Pr\bigl(T_a^{(z)}<T_b^{(z)}\bigr)
=\frac{U_G(a,b)}{|V|}.
\tag{2.3}
\]

Thus the majority-margin matrix of the random strict ranking is

\[
M_G(a,b)
=\Pr_\Pi(a\succ b)-\Pr_\Pi(b\succ a)
=\frac{2A_G(a,b)}{|V|}.
\tag{2.4}
\]

Co-location agrees at the level of expected payoffs because both models divide
a colocated target share equally. Consequently \(\mathcal O(G)\) is exactly the
set of maximal lotteries of the induced random-ranking profile.
\end{proof}

This is an equivalence of two-player expected-payoff games. It is not an
equivalence of the complete joint random territorial painting process, and the
same shared-ranking construction does not reproduce natural three-walker
competition when starts are repeated. On \(P_3\), for starting profile
\((0,0,2)\), the conditional target shares are

\begin{table}[htbp]
\centering
\caption{The repeated-location obstruction on \(P_3\).}
\label{tab:three-walker-obstruction}
\begin{tabular}{@{}cll@{}}
\toprule
Target & Independent three-walker shares & Shared-ranking location shares\\
\midrule
\(0\) & \((1/2,1/2,0)\) & \((1/2,1/2,0)\)\\
\(1\) & \((1/3,1/3,1/3)\) & \((1/4,1/4,1/2)\)\\
\(2\) & \((0,0,1)\) & \((0,0,1)\)\\
\bottomrule
\end{tabular}
\end{table}

Indeed, at target \(1\) the three independent hitting times are identically
distributed, whereas the shared ranking first chooses which of the two distinct
locations wins and then divides a winning colocated location between its two
occupants. The resulting normalized individual payoffs are, respectively,

\[
\left(\frac5{18},\frac5{18},\frac49\right)
\quad\text{and}\quad
\left(\frac14,\frac14,\frac12\right).
\tag{2.5}
\]

This example rules out extension of this particular shared strict-ranking
representation; it does not rule out every generalized multiplayer model with
additional ties, player-specific proximity, or other latent randomness.

\hypertarget{graph-induced-linear-exact-equilibrium-support}{%
\section{Graph-induced linear exact equilibrium support}\label{graph-induced-linear-exact-equilibrium-support}}

For integers \(k\geq 2\) and \(r\geq 1\), let

\[
G_{k,r}
=K_k\text{ with }r\text{ pendant leaves attached to each clique vertex}.
\tag{3.1}
\]

Write \(u_1,\ldots,u_k\) for the hubs and \(L_i\) for the \(r\) leaves attached
to \(u_i\). Thus

\[
|V(G_{k,r})|=k(r+1),
\qquad
\deg(u_i)=\Delta=k+r-1,
\qquad
\deg(\ell)=1.
\tag{3.2}
\]

The complete all-\((k,r)\) classification and its product-chain proof appear in
Appendix~\ref{app:multihub}. Here its central consequence is stated separately.

\begin{theorem}[Linear exact equilibrium support]\label{thm:linear-support}
For every \(k\geq 2\), the graph \(G_{k,2}\) has \(3k\)
vertices and every exact optimal strategy \(\mu\) satisfies

\[
\operatorname{supp}(\mu)=\{u_1,\ldots,u_k\}.
\tag{3.3}
\]

Consequently

\[
\min_{\mu\in\mathcal O(G_{k,2})}|\operatorname{supp}(\mu)|
=\max_{\mu\in\mathcal O(G_{k,2})}|\operatorname{supp}(\mu)|
=k
=\frac{|V(G_{k,2})|}{3}.
\tag{3.4}
\]
\end{theorem}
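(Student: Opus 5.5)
The plan is to reduce the $3k\times 3k$ matrix $A_{G_{k,2}}$ to a handful of orbit parameters using the automorphism group of $G_{k,2}$, and then run a complementary-slackness argument for the symmetric zero-sum game of value zero in (1.5)--(1.6). Permutations of the hubs (carrying their leaf sets along) and permutations within each $L_i$ are automorphisms, and $A$ is invariant under them.

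\textbf{Step 1: hub--hub entries.} The automorphism swapping $u_i\leftrightarrow u_j$ together with $L_i\leftrightarrow L_j$ gives $A(u_i,u_j)=A(u_j,u_i)$. Combined with skew symmetry (1.4), this forces $A(u_i,u_j)=0$. The same argument gives $A(\ell,\ell')=0$ for two leaves $\ell,\ell'$ of the same hub.

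\textbf{Step 2: the two parameters.} Only two further quantities enter the argument:
\[
h=A(u_i,\ell)\ (\ell\in L_i),\qquad d=A(\ell,u_j)\ (\ell\in L_i,\ j\neq i).
\]
By symmetry these are independent of the particular indices. I would compute them in closed form as rational functions of $k$ (with $r=2$) by solving the product-chain Dirichlet problem (1.7) for each target $z$. This is done after lumping the product chain under the stabilizer of the start pair and target, so that each system involves only a bounded number of orbit states (hub of $i$, hub of $j$, other hubs, leaf sets of each), regardless of $k$. For $k=2$ this should reproduce $h_{2,2}=385/1812$ and $d_{2,2}=19/755$. The inequalities to establish for all $k\ge2$ are
\[
d>0\qquad\text{and}\qquad h>(k-1)\,d.
\]
I would prove them by clearing denominators and exhibiting the resulting polynomials in $k$ (shifted to $k=2+s$) with positive coefficients.

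\textbf{Step 3: the uniform hub measure is optimal, and optimal strategies avoid leaves.} Let $\mu^*$ be uniform on $\{u_1,\dots,u_k\}$. By Step 1, $\sum_a\mu^*(a)A(a,u_j)=0$ for every hub $u_j$. For a leaf $\ell\in L_i$,
\[
\sum_a\mu^*(a)A(a,\ell)=\frac{h-(k-1)d}{k}>0 .
\]
Hence $\mu^*\in\mathcal O(G_{k,2})$, with strict inequality against every leaf. Now take any $\nu\in\mathcal O$. Optimality of $\nu$ gives $\sum_x\nu(x)A(x,\mu^*)\ge0$. By skew symmetry this sum equals $-\sum_x\nu(x)A(\mu^*,x)$, which is $\le0$. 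Therefore $\sum_x\nu(x)A(\mu^*,x)=0$. Every term in this sum is nonnegative, and the leaf terms are strictly positive, so $\nu$ vanishes on all leaves.

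\textbf{Step 4: every hub is charged.} Suppose $\nu\in\mathcal O$ is supported on hubs with $\nu(u_i)=0$, and pick $\ell\in L_i$. Then
\[
\sum_x\nu(x)A(x,\ell)=-\sum_{j\ne i}\nu(u_j)\,d=-d<0,
\]
contradicting optimality. Hence $\operatorname{supp}\nu=\{u_1,\dots,u_k\}$, which is (3.3). Since $|V(G_{k,2})|=3k$, (3.4) follows immediately.

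\textbf{Main obstacle.} The real difficulty is Step 2: obtaining exact, $k$-uniform formulas for $h$ and $d$ and certifying $h>(k-1)d>0$ for all $k\ge 2$. The margin is small ($d_{2,2}$ is tiny relative to $h_{2,2}$), and $h$ and $(k-1)d$ must be compared as $k$ grows. I would first aim to show that $h$ stays bounded below while $d=O(k^{-2})$, which handles all large $k$ by a crude estimate. The remaining inequality would then be settled by the exact rational identities and the positive shifted-coefficient check.
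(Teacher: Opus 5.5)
Your route is sound, and its computational core is the same as the paper's. Automorphisms reduce everything to \(h=h_{k,2}\) and \(d=d_{k,2}\); your \(d=A(\ell,u_j)\) matches the sign convention in (A.2). Both parameters come from lumped product-chain Dirichlet systems: the paper lumps each walk separately by orbits of the target stabilizer, giving (A.6)--(A.8) and Cramer's rule. The decisive inequalities are exactly \(d>0\) and \(h>(k-1)d\). The paper gets them from \(D_k,W_k>0\) and from \((k+3)H_k-(k-1)D_k=48k^4+276k^3+454k^2+177k-55\), which has positive coefficients after the shift \(p=k-2\).

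The genuine difference is the game-theoretic step. The paper tests an arbitrary \(\mu\) against each leaf of \(L_i\) and against \(u_i\), giving \(x_i\geq tX\) and \(y_i\leq tY\) with \(t=d/(h+d)\). Summing the second family forces \(Y=0\) when \(kt<1\), and the first family then yields the whole polytope (3.9), with the explicit floor \(D_k/((k+3)H_k+D_k)\). Your argument is complementary slackness against the uniform hub measure, followed by testing a hub-supported \(\nu\) against a leaf of an uncharged hub. It is equally valid and slightly leaner, since it never uses leaf--leaf entries. However, it yields only the support, not the explicit description of \(\mathcal O(G_{k,2})\).

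Your \emph{main obstacle} paragraph misjudges the asymptotics, and the shortcut it proposes cannot work, because \(d\) is not \(O(k^{-2})\). From (3.6), \(h_{k,2}\to\frac12\) and \(d_{k,2}\sim\frac1{2k}\), as in (3.10). Hence \((k-1)d_{k,2}\to\frac12\) as well, and \(h_{k,2}-(k-1)d_{k,2}\sim 3/k\). The inequality is tight to leading order exactly in the large-\(k\) regime, so no crude bound can separate the two sides there.

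At \(k=2\), by contrast, the margin is comfortable: \(385/1812\approx0.212\) against \(19/755\approx0.025\). That is the opposite of your remark that the margin is small there. The exact rational identity with the positive shifted-coefficient check, which you list only as a finishing step, must carry the entire range \(k\geq2\) at once, and it does.

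Finally, your Step 2 is still only a plan. The substantive content of the theorem is actually producing the closed forms (3.5)--(3.6) and the identity (3.7).
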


\begin{proof}
More explicitly, let

\[
\begin{aligned}
W_k&=8k^3+46k^2+79k+47,\\
H_k&=8k^4+54k^3+121k^2+69k-27,\\
D_k&=8k^4+38k^3+45k^2+23k+26.
\end{aligned}
\tag{3.5}
\]

The two nonzero centered-payoff types are

\[
h_{k,2}=\frac{H_k}{2(k+4)W_k},
\qquad
d_{k,2}=\frac{D_k}{2(k+3)(k+4)W_k},
\tag{3.6}
\]

where \(h_{k,2}\) is a hub's advantage against one of its own leaves and
\(-d_{k,2}\) is its centered payoff against a leaf attached to another hub.
Appendix~\ref{app:multihub} proves that

\[
h_{k,2}-(k-1)d_{k,2}
=\frac{48k^4+276k^3+454k^2+177k-55}
       {2(k+3)(k+4)W_k}.
\tag{3.7}
\]

With \(p=k-2\), its numerator becomes

\[
5091+6841p+3262p^2+660p^3+48p^4>0.
\tag{3.8}
\]

Therefore \(h_{k,2}>(k-1)d_{k,2}>0\), and the full optimal-strategy set is

\[
\mathcal O(G_{k,2})
=\left\{
\sum_{i=1}^k x_i\delta_{u_i}:
\sum_{i=1}^k x_i=1,\
x_i\geq
\frac{D_k}{(k+3)H_k+D_k}
\text{ for every }i
\right\}.
\tag{3.9}
\]

Every coordinate in (3.9) is strictly positive, proving (3.3).
\end{proof}

\begin{corollary}[A six-vertex tree without a pure equilibrium]
\label{cor:tree-obstruction}
The double-star tree \(G_{2,2}\) has six vertices and no pure Nash
equilibrium. Its complete optimal-strategy set is
\[
\mathcal O(G_{2,2})
=\left\{p\delta_{u_1}+(1-p)\delta_{u_2}:
\frac{228}{2153}\leq p\leq\frac{1925}{2153}\right\}.
\]
In particular, every optimal strategy randomizes over both hubs.
\end{corollary}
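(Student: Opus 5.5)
The plan is to specialize Theorem~\ref{thm:linear-support} and its explicit description (3.9) of \(\mathcal O(G_{k,2})\) to \(k=2\), and then to read off the absence of pure equilibria from (1.6). The only real work is rational arithmetic. I expect no conceptual obstacle: all product-chain content is already packaged in (3.5)--(3.9), which the theorem's proof takes from Appendix~\ref{app:multihub}.

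\emph{Step 1 (structure).} For \(k=2\), the clique \(K_2\) is the single edge \(u_1u_2\), and each hub receives two pendant leaves. The graph is therefore connected with \(6\) vertices and \(5\) edges, so it is a tree: the double star with two adjacent centers of degree \(\Delta=3\).

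\emph{Step 2 (evaluate the polynomials).} Substituting \(k=2\) into (3.5) gives
\[
W_2=453,\qquad H_2=1155,\qquad D_2=684 .
\]
Then (3.6) yields
\[
h_{2,2}=\frac{1155}{2\cdot 6\cdot 453}=\frac{385}{1812},
\qquad
d_{2,2}=\frac{684}{2\cdot 5\cdot 6\cdot 453}=\frac{19}{755},
\]
which matches the values announced in the main results and serves as a check. The lower bound in (3.9) becomes
\[
\frac{D_2}{5H_2+D_2}=\frac{684}{6459}=\frac{228}{2153}.
\]
With \(x_1=p\) and \(x_2=1-p\), the constraints \(x_i\ge 228/2153\) for \(i=1,2\) are equivalent to \(228/2153\le p\le 1-228/2153=1925/2153\). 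This is exactly the displayed set. The inequality (3.8) at \(p=0\) (numerator \(5091>0\)) confirms \(h_{2,2}>d_{2,2}\), so (3.9) indeed applies with a nonempty interval.

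\emph{Step 3 (no pure equilibrium).} By (1.6), \(\operatorname{PNE}(G_{2,2})=C_{G_{2,2}}\times C_{G_{2,2}}\), and \(a\in C_G\) exactly when \(\delta_a\in\mathcal O(G)\). A leaf \(\ell\) is excluded because every optimal strategy is supported on the hubs. The hub \(u_1\) is excluded because \(\delta_{u_1}\) corresponds to \(p=1>1925/2153\), and symmetrically \(u_2\) is excluded because \(p=0<228/2153\). Hence \(C_{G_{2,2}}=\emptyset\) and there is no pure Nash equilibrium. Since the interval for \(p\) lies strictly inside \((0,1)\), every optimal strategy puts positive mass on both hubs.
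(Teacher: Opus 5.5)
Your proposal is correct and follows the paper's proof: you specialize (3.5)--(3.6) and the optimal-strategy set (3.9) at \(k=2\), obtaining \(h_{2,2}=385/1812\), \(d_{2,2}=19/755\) and the threshold \(228/2153\), and then use (1.6) to rule out pure equilibria, since leaves receive no mass and neither hub can receive full mass. Your arithmetic checks out, including \(5091>0\) from (3.8); note that there the symbol \(p\) denotes the shift \(k-2\), not your mixing weight \(p\).
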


\begin{proof}
At \(k=2\), (3.6) gives
\[
h_{2,2}=\frac{385}{1812},
\qquad
d_{2,2}=\frac{19}{755},
\qquad
\frac{d_{2,2}}{h_{2,2}+d_{2,2}}=\frac{228}{2153}.
\]
Substitution into (3.9) gives the displayed interval. Since neither hub may
receive zero mass and leaves receive none, no optimal strategy is pure;
(1.6) consequently rules out pure equilibrium.
\end{proof}

Roshanbin~\cite{roshanbin2014diffusion} proves existence of pure equilibria on
trees for a different game with deterministic diffusion. The six-vertex
example in Corollary~\ref{cor:tree-obstruction} shows that this conclusion need
not persist for independent first-arrival walks.

The support lower bound is solely an exact-optimality phenomenon. A pure hub
has worst unnormalized payoff deficit \(d_{k,2}\), and

\[
d_{k,2}\sim\frac1{2k},
\qquad
\frac{d_{k,2}}{|V(G_{k,2})|}
=\frac{d_{k,2}}{3k}
\sim\frac1{6k^2}.
\tag{3.10}
\]

Thus pure hubs themselves become highly accurate approximate optimal strategies
under normalized payoff. No large-support lower bound for approximate
equilibria is asserted.

\hypertarget{finite-paths-harmonic-kernels-and-reflecting-boundary-barriers}{%
\section{Finite paths: harmonic kernels and reflecting-boundary barriers}\label{finite-paths-harmonic-kernels-and-reflecting-boundary-barriers}}

For the rate-one reflecting walk on \(\{0,\ldots,L\}\), let \(T_{s,L}\) be the
time to hit zero from \(s\). For independent such walks, define

\[
p_{A,B}(s,t)=\Pr(T_{s,A}<T_{t,B})
\tag{4.1}
\]

away from simultaneous starts at zero, and assign the corner value \(1/2\).
For \(0\leq x\leq y\leq L\), set

\[
h_L(x,y)=2p_{L,L}(x,y)-1.
\tag{4.2}
\]

Its boundary conditions are

\[
h_L(0,y)=1,
\qquad
h_L(x,x)=0,
\tag{4.3}
\]

and, crucially, at the rate-one reflecting boundary,

\[
4h_L(x,L)
=h_L(x-1,L)+h_L(x+1,L)+2h_L(x,L-1).
\tag{4.4}
\]

The coefficient two in (4.4) comes from the unique endpoint neighbor having
jump probability one. A nearest-neighbor folding coupling gives the usual
strict monotonicity of hitting times and kernel comparisons.

\hypertarget{quadratic-barrier}{%
\subsection{Quadratic barrier}\label{quadratic-barrier}}

Define

\[
G_L(x,y)
=\frac{(L-x)^2-(L-y)^2}{L^2}.
\tag{4.5}
\]

The polynomial \(u^2-v^2\), with \(u=L-x\) and \(v=L-y\), has zero discrete
Laplacian on the unrestricted square lattice. It is even in \(v\), so the
replacement of the two neighbors \(v=\pm1\) by twice the neighbor \(v=1\)
preserves harmonicity at (4.4). It vanishes on the diagonal and satisfies

\[
G_L(0,y)=1-\frac{(L-y)^2}{L^2}\leq 1.
\]

The maximum principle therefore gives

\[
h_L(x,y)\geq G_L(x,y),
\qquad
h_L(L-d,L)\geq\frac{d^2}{L^2}.
\tag{4.6}
\]

For comparison, the lower barrier
\[
\frac{(y-x)(2L-x-y)}{2L(x+y)}
\]
has the same numerator as (4.5). Consequently, (4.5) improves this
comparison precisely when

\[
x+y>\frac L2,
\tag{4.7}
\]

not globally. For example, \(L=10,x=1,y=2\) gives \(17/100<17/60\).

\hypertarget{degree-six-improvement}{%
\subsection{Degree-six improvement}\label{degree-six-improvement}}

Let

\[
P_6(u,v)
=u^6-15u^4v^2+15u^2v^4-v^6-5(u^4-v^4).
\tag{4.8}
\]

The unrestricted discrete Laplacian satisfies

\[
\Delta\bigl(u^6-15u^4v^2+15u^2v^4-v^6\bigr)
=60(u^2-v^2),
\qquad
\Delta(u^4-v^4)=12(u^2-v^2),
\]

hence \(\Delta P_6=0\). The polynomial vanishes when \(u=v\) and is even in
\(v\), so it also satisfies the reflecting-boundary equation (4.4). For
\(L\geq3\), put

\[
D_L=P_6(L,0)=L^4(L^2-5)>0
\]

and

\[
\widetilde G_L(x,y)
=G_L(x,y)
+\frac1{30}
\left(
G_L(x,y)-\frac{P_6(L-x,L-y)}{D_L}
\right).
\tag{4.9}
\]

On the absorbing boundary \(x=0\), put \(v=L-y\). For \(v\geq1\),

\[
\frac{G_L(0,y)-P_6(L,v)/D_L}{v^2/L^2}
=\frac{14L^4+5L^2-(15L^2+5)v^2+v^4}
     {L^2(L^2-5)}
<30.
\tag{4.10}
\]

The numerator decreases as a function of \(v^2\in[1,L^2]\). The final
inequality follows from

\[
30L^2(L^2-5)-(14L^4-10L^2-4)
=4(4L^4-35L^2+1)>0.
\]

At \(v=0\), the correction vanishes. The maximum principle gives

\[
h_L(x,y)\geq\widetilde G_L(x,y).
\tag{4.11}
\]

In particular,

\[
h_L(L-d,L)
\geq\frac{d^2}{L^2}
+\frac{d^2(L^2-d^2)(L^2+d^2-5)}
       {30L^4(L^2-5)}
\geq\frac{d^2}{L^2}
+\frac1{30}
\left(
\frac{d^2}{L^2}-\frac{d^6}{L^6}
\right).
\tag{4.12}
\]

The last comparison reduces to the nonnegative numerator
\(5d^2(L^2-d^2)\).

\hypertarget{outer-flux-rectangle-domination-and-the-aggregate-theorem}{%
\section{Outer flux, rectangle domination, and the aggregate theorem}\label{outer-flux-rectangle-domination-and-the-aggregate-theorem}}

Write

\[
N=2m-\varepsilon,
\qquad
\varepsilon\in\{-1,0,1\},
\qquad
c=m,
\qquad
b=m-d.
\tag{5.1}
\]

Here \(\varepsilon=0\) is the odd path, \(\varepsilon=-1\) represents the lower
center of an even path, and \(\varepsilon=1\) represents its upper center.
Splitting target vertices into those outside and inside the interval between
the starts gives the exact identity

\[
2\left(
U_{P_{N+1}}(m,m-d)-\frac{N+1}{2}
\right)
=O_{m,d,\varepsilon}+I_{m,d,\varepsilon},
\tag{5.2}
\]

where

\[
O_{m,d,\varepsilon}
=\sum_{t=1}^{m-\varepsilon}h_{m+t}(t,t+d)
-\sum_{s=1}^{m-d}h_{m+d-\varepsilon+s}(s,s+d)
\tag{5.3}
\]

and

\[
I_{m,d,\varepsilon}
=-\sum_{r=1}^{d-1}\delta_{m,d,r}^{(\varepsilon)}.
\tag{5.4}
\]

With \(\ell=d-r\), the interior deficits are

\[
\delta_{m,d,r}^{(\varepsilon)}
=1-p_{m+r-\varepsilon,m-r}(r,\ell)
  -p_{m+\ell-\varepsilon,m-\ell}(\ell,r).
\tag{5.5}
\]

Equation (5.4) is an empty sum, equal to zero, when \(d=1\); no strict
inequality of the form \(0<0\) is used.

\hypertarget{outer-flux-comparison}{%
\subsection{Outer-flux comparison}\label{outer-flux-comparison}}

Set

\[
w_L(x,y)=h_{L+1}(x,y)-h_L(x,y)
\]

and

\[
s_L(x)=h_{L+1}(x,L+1)-h_{L+1}(x,L-1)>0.
\]

Strict positivity can also be seen by coupling the two second-coordinate
starts with the same underlying folded walk: the more distant start can only
increase the probability that the first coordinate reaches its target before
the walkers meet, and a prescribed finite sequence of jumps makes this
increase strict.

Subtracting the reflecting-boundary equations yields

\[
4w_L(x,L)
=w_L(x-1,L)+w_L(x+1,L)+2w_L(x,L-1)+s_L(x).
\tag{5.6}
\]

The maximum principle therefore gives \(w_L\geq0\). On the half triangle
\(x<y\) and \(x+y<L+1\), define

\[
\mathscr D_L(x,y)
=w_L(L+1-y,L+1-x)-w_L(x,y).
\tag{5.7}
\]

It is harmonic in the interior and vanishes on the diagonal and the
anti-diagonal. At its remaining boundary, direct substitution of (5.6) gives

\[
\begin{aligned}
4\mathscr D_L(1,y)
&=\mathscr D_L(1,y-1)
  +\mathscr D_L(1,y+1)
  +\mathscr D_L(2,y)\\
&\quad+w_L(L+1-y,L-1)+s_L(L+1-y).
\end{aligned}
\tag{5.8}
\]

The final two terms are nonnegative and the last is strictly positive.
Consequently \(\mathscr D_L(x,y)>0\) throughout its transient half triangle.
Telescoping the two sums in (5.3) now gives the exact identity

\[
\begin{aligned}
O_{m,d,\varepsilon}
&=\sum_{j=0}^{d-\varepsilon-1}
   h_{m+j+1}(m-d+j+1,m+j+1)\\
&\quad+
 \sum_{j=0}^{d-\varepsilon-1}
 \sum_{s=1}^{m-d}
 \mathscr D_{m+j+s}(s,s+d).
\end{aligned}
\tag{5.9}
\]

In particular,

\[
O_{m,d,\varepsilon}
\geq
\sum_{j=0}^{d-\varepsilon-1}
h_{m+j+1}(m-d+j+1,m+j+1).
\tag{5.10}
\]

Every parity convention thus contains at least the \(d-1\) terms with
\(L=m+1,\ldots,m+d-1\).

\hypertarget{common-dirichlet-rectangle}{%
\subsection{Common Dirichlet rectangle}\label{common-dirichlet-rectangle}}

Let

\[
a=m-d,
\qquad
\ell=d-r,
\qquad
W=2(a+r),
\qquad
H=2(a+\ell).
\tag{5.11}
\]

Let \(J_{m,d,r}\) be the probability that an ordinary nearest-neighbor walk in
the Dirichlet rectangle

\[
(0,W)\times(0,H)
\]

started at \((r,\ell)\) first exits through its right or top side.

The common folding construction represents the four relevant reflected
first-passage times on two independent underlying walks as

\[
R_0=T_{r,m-\ell},
\qquad
R_1=T_{r,m+r-\varepsilon},
\qquad
S_0=T_{\ell,m-r},
\qquad
S_1=T_{\ell,m+\ell-\varepsilon}.
\tag{5.12}
\]

The short intervals are contained in the long intervals: on either coordinate
the farther unfolded absorbing boundary is displaced by
\(d-\varepsilon\geq1\). Thus \(R_0\leq R_1\) and \(S_0\leq S_1\). The two
nonoverlap events are
\(\{R_1<S_0\}\) and \(\{S_1<R_0\}\). Hence

\[
\delta_{m,d,r}^{(\varepsilon)}
=\Pr\bigl([R_0,R_1]\cap[S_0,S_1]\neq\varnothing\bigr).
\tag{5.13}
\]

If the short-interval rectangle first exits through its left or bottom side,
one of the short and long exit times agrees, so these intervals cannot
overlap. Far-side exit is therefore necessary. Conversely, prescribe finitely
many successive jumps of the first unfolded coordinate toward its far
short-interval boundary and then its farther long-interval boundary while the
second coordinate makes no jump. Independence and exponential holding times
give this event strictly positive probability; it has far-side rectangle exit
and \(R_0<R_1<S_0\), hence no interval overlap. Therefore

\[
0\leq\delta_{m,d,r}^{(\varepsilon)}
<J_{m,d,r}.
\tag{5.14}
\]

Coordinatewise monotonicity and central reflection in the rectangle give

\[
J_{m,d,r}\leq\frac12,
\qquad
J_{m,d,r}=\frac12\ \Longleftrightarrow\ m=d.
\tag{5.15}
\]

The same strict comparison (5.14) applies to all three values of
\(\varepsilon\); it does not identify the inequivalent even-center rows.

\hypertarget{parameter-uniform-aggregate-rectangle-theorem}{%
\subsection{Parameter-uniform aggregate rectangle theorem}\label{parameter-uniform-aggregate-rectangle-theorem}}

\begin{theorem}[Aggregate rectangle exit]\label{thm:aggregate-rectangle}
For \(m>d\geq2\),

\[
\sum_{r=1}^{d-1}J_{m,d,r}
\leq
d^2\sum_{j=1}^{d-1}\frac1{(m+j)^2}.
\tag{5.16}
\]

In fact, with \(a=m-d\geq1\),

\[
\sum_{r=1}^{d-1}J_{m,d,r}
\leq
T_{a,d}
=\frac{d^2(d-1)}
       {(a+d+1)(a+2d-1)}.
\tag{5.17}
\]
\end{theorem}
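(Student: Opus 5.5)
The plan is to bound each $J_{m,d,r}$ by optional stopping against an explicit harmonic function on the Dirichlet rectangle $(0,W)\times(0,H)$, then sum over $r$ and compare the result with the right-hand sides. The guiding heuristic is that a nearest-neighbour walk started at $(r,\ell)$, with $r,\ell\le d-1$ small compared with $W=2(a+r)$ and $H=2(a+\ell)$, must escape from a corner. Before it exits through the right or top side it has to avoid both near sides. So $J_{m,d,r}$ should be of product order $r\ell/(WH)$, not of the additive order $r/W+\ell/H$ given by one-dimensional gambler's-ruin bounds. The product $xy$ is exactly discrete harmonic on $\mathbb Z^2$, since its discrete Laplacian vanishes. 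It vanishes on the left and bottom sides, which are precisely the sides whose exit contributes nothing to $J_{m,d,r}$.

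\textbf{Step 1 (single-rectangle barrier).} Stop the walk at the exit time $\tau$ from the rectangle. Optional stopping for the bounded martingale $X_{t\wedge\tau}Y_{t\wedge\tau}$ gives
\[
r\ell=\mathbb E[X_\tau Y_\tau]\ \ge\ W\,\mathbb E[Y_\tau;\text{right}]+H\,\mathbb E[X_\tau;\text{top}].
\]
This alone does not bound $J_{m,d,r}$, because exits near the corners $(W,0)$ and $(0,H)$ carry small weight. I would therefore augment $xy$ by a harmonic correction that is nonnegative on the left and bottom sides and at least $1$ on the right and top sides. Natural candidates are a multiple of $x^2-y^2$ (discrete harmonic) shifted to be positive on the relevant side, or the sum of two such terms, one per far side. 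The aim is a majorant $\Phi$ with $\Phi(r,\ell)\le d^2/\bigl((a+d+1)(a+2d-1)\bigr)$. The bound $4r\ell\le (r+\ell)^2=d^2$ supplies the factor $d^2$. The factor $(a+r)(a+\ell)$ coming from $WH/4$ must be bounded below uniformly in $1\le r\le d-1$ by the product $(a+d+1)(a+2d-1)$, possibly after a weakening of the barrier. The maximum principle on the finite rectangle then gives $J_{m,d,r}\le\Phi(r,\ell)$.

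\textbf{Step 2 (summation).} Summing the $d-1$ per-term bounds yields $T_{a,d}=d^2(d-1)/\bigl((a+d+1)(a+2d-1)\bigr)$, which is (5.17). If the per-term bound instead depends on $r$, for instance through $r\ell=r(d-r)$ and $(a+r)(a+d-r)$, I would sum it exactly as a rational function of $r$ and dominate the total by $T_{a,d}$. The tool is a symbolic polynomial identity whose numerator has positive coefficients after a shift $a\mapsto 1+a'$, $d\mapsto 2+d'$, in the style of (3.8). For (5.16), write $a+d=m$ and compare with $\sum_{j=1}^{d-1}(m+j)^{-2}\ge (d-1)/\bigl((m+1)(m+d-1)\bigr)$, using the telescoping bound $(m+j)^{-2}\ge 1/(m+j)-1/(m+j+1)$. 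Since $(a+d+1)(a+2d-1)=(m+1)(m+d-1)$, this gives $T_{a,d}\le d^2\sum_{j}(m+j)^{-2}$, and hence (5.16).

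\textbf{Main obstacle.} The hard step is Step 1: making the corner contributions rigorous with a constant sharp enough to land exactly on $(a+d+1)(a+2d-1)$ rather than on something like $(a+1)(a+d-1)$. The pure product barrier $xy$ degenerates along the far sides near the corners $(W,0)$ and $(0,H)$. The correction must therefore be designed so that it stays harmonic or superharmonic and costs only lower-order terms at $(r,\ell)$. If a single clean barrier fails, I would split the far sides into a part far from the corners, handled by $xy$, and a part near the corners. The near-corner part would be handled by the one-dimensional bound that reaching $x=W$ requires traversing the whole width while $y$ stays within a bounded strip. In that fallback the constants would be absorbed in the final positivity check of Step 2.
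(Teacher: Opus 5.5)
There is a genuine gap. Step~1 carries the entire content of (5.17), and it is only a plan: no majorant $\Phi$ is exhibited, and neither the shifted multiple of $x^2-y^2$ nor the near-corner fallback is shown to dominate the far-side exit indicator with usable constants.

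Your guiding heuristic is also valid only when $a\gg d$. The start $(r,\ell)$ is displaced by just $a$ in each coordinate from the centre $(a+r,a+\ell)$, where $J=\tfrac12$ by central reflection. So for $a\ll\min(r,\ell)$, $J_{m,d,r}$ is close to $\tfrac12$, not of order $r\ell/(WH)$. In that regime your per-term target $J_{m,d,r}\le T_{a,d}/(d-1)$ is nearly sharp. For $a=1$ it equals $\tfrac12-\tfrac1{d+2}$, and at $r=d/2$ the walk starts one diagonal step from the centre of a square of side $d+2$. Any barrier would therefore have to capture an $O(1/d)$ deficit below $\tfrac12$ with the correct constant.

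The arithmetic you outline cannot close as stated. The requirement $(a+r)(a+\ell)\ge(a+d+1)(a+2d-1)$ fails for every $r$, since $(a+r)(a+\ell)\le(a+d/2)^2$. Moreover, a uniform per-term bound is strictly stronger than what (5.17) needs, and even the paper's own per-term estimate (5.24) misses it. At $(a,d,r)=(1,10,4)$ one gets $C_4=\tfrac{104}{245}>\tfrac5{12}=T_{1,10}/9$. In the paper, the summation over $r$ does essential work.

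The corner problem you correctly identify is resolved in the paper by a killed-walk reflection, not by a corrected barrier. Since $\ell\le H/2$, reflect each surviving vertical path after its last visit to $\ell$ and condition on the independent horizontal coordinate. This gives $\mathbb E[Y_\tau;\text{right exit}]\ge\ell P_R$, and symmetrically for top exits. Your product identity then becomes the linear constraint $r\ell\ge W\ell P_R+HrP_T$.

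The paper pairs this constraint with the one-sided bounds $P_R\le3r\ell/(4(a+r)^2)$ and $P_T\le3r\ell/(4(a+\ell)^2)$. These come from the superharmonic wedge barrier $3v/(4u)-v^3/(4u^3)$, a midpoint reflection, and the half-strip majorant $3xy/(4A^2)$. Summation over $r$ is then done in two ranges:
\begin{itemize}
\item for $a\ge d$, the one-sided bounds are summed directly using convexity;
\item for $a\le d$, their combination $C_s$ with the linear constraint is summed using maximization and Jensen.
\end{itemize}
Both reduce to positivity of shifted polynomials. The pairs $(a,d)=(1,2),(1,3),(2,2)$ are handled separately, the last via the exact $6\times6$ solution $J_{4,2,1}=31/495$.

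Your deduction of (5.16) also has a slip. The bound $(m+j)^{-2}\ge\frac1{m+j}-\frac1{m+j+1}$ telescopes only to $\frac{d-1}{(m+1)(m+d)}$, which is too weak. Use instead $\frac1{(m+j)^2}\ge\frac m{m+1}\cdot\frac1{(m+j-1)(m+j)}$ for $j\ge1$, which yields $\frac{d-1}{(m+1)(m+d-1)}$ as in (5.39).
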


\begin{proof}
The proof consists entirely of the following parameter-uniform estimates and
three explicitly handled exceptional pairs.

First consider the bounded quadrant barrier

\[
q(u,v)=\frac{3v}{4u}-\frac{v^3}{4u^3},
\qquad
u\geq1,\quad 0\leq v\leq u.
\tag{5.18}
\]

It is undefined at \((0,0)\), which is excluded. On its stated domain,
\(0\leq q\leq1/2\). For \(u\geq2\) and \(1\leq v<u\), direct calculation gives

\[
\Delta q(u,v)
=\frac{
v\left[
3(u^2-1)^2-v^2(6u^4-3u^2+1)
\right]
}{
2u^3(u^2-1)^3
}
<0.
\tag{5.19}
\]

Indeed, replacing \(v^2\) by its minimum value one bounds the numerator's
bracket above by

\[
-3u^4-3u^2+2<0.
\]

The stopped barrier is bounded. The wedge exit time is almost surely finite,
since its second coordinate hits zero almost surely. Applying optional
stopping to its minimum with a deterministic time, and then invoking bounded
convergence, therefore justifies optional stopping on the unbounded wedge.
Let

\[
\omega(x,y)
=\Pr_{(x,y)}\bigl(\tau_{\{X=0\}}<\tau_{\{Y=0\}}\bigr).
\]

Coordinate-exchange symmetry gives \(\omega(u,u)=1/2\), while
\(\omega(u,0)=0\). Optional stopping on the wedge \(0\leq y\leq x\) therefore
shows \(\omega(x,y)\leq q(x,y)\). If \(P_A(x,y)\) denotes the probability of
hitting \(x=2A\) before \(x=0\) or \(y=0\), reflection in the vertical line
\(x=A\) gives the exact midpoint identity

\[
\begin{aligned}
P_A(A,y)
&=\Pr_{(A,y)}
\bigl(
\tau_{\{X=0\}}
<
\tau_{\{X=2A\}}\wedge\tau_{\{Y=0\}}
\bigr)\\
&\leq\omega(A,y)
\leq q(A,y)
\leq\frac{3y}{4A}
\qquad(0\leq y\leq A).
\end{aligned}
\tag{5.19a}
\]

For \(y>A\), the same final bound is trivial once it reaches one, and in the
remaining range it follows by the same midpoint comparison using
\(P_A(A,y)\leq1/2\). The harmonic function \(3xy/(4A^2)\) and the half-strip
maximum principle now give

\[
P_A(x,y)\leq\frac{3xy}{4A^2},
\qquad
1\leq x\leq A,\quad y\geq1,
\tag{5.20}
\]

where \(P_A\) is as above. To justify the maximum principle on the
unbounded half-strip, truncate at height \(M\) with

\[
M\geq\left\lceil\frac{4A^2}{3}\right\rceil.
\tag{5.21}
\]

Then \(3xM/(4A^2)\geq1\) for every integer \(1\leq x\leq A\), so the top
boundary comparison is valid; taking \(M\to\infty\) proves (5.20).

Let \(P_R\) and \(P_T\) be the right-side and top-side exit probabilities of
the rectangle in (5.11). Applying (5.20) separately yields

\[
P_R\leq\frac{3r(d-r)}{4(a+r)^2},
\qquad
P_T\leq\frac{3r(d-r)}{4(a+d-r)^2}.
\tag{5.22}
\]

For a walk killed at both \(0\) and \(2A\), started at \(y\leq A\), reflect
each surviving path ending at \(y-j<y\) after its last visit to \(y\).
After that visit the original tail remains strictly below \(y\); its
reflection remains strictly above \(y\) and below \(2y\leq2A\). It therefore
survives, ends at \(y+j\), and has the same probability. This injection
pairs every lower endpoint with an equally weighted higher endpoint, so the
conditional mean of the surviving coordinate is at least \(y\). Conditioning
on the independent exit time of the other coordinate preserves the
comparison. The product of the two coordinate walks is a martingale, and its
stopped value is bounded by \(WH\); bounded convergence at the almost surely
finite rectangle exit time therefore gives

\[
\begin{aligned}
r(d-r)
&=2(a+r)\mathbb E[Y_\tau;\text{right exit}]\\
&\quad+2(a+d-r)\mathbb E[X_\tau;\text{top exit}]\\
&\geq2(a+r)(d-r)P_R+2(a+d-r)rP_T.
\end{aligned}
\tag{5.23}
\]

The certificate checks this killed-walk reflection comparison independently
for 8,835 exact instances, but the reflection injection, not the finite
checks, is the parameter-uniform justification.

Put

\[
s=\min(r,d-r),
\qquad
t=\max(r,d-r).
\]

Combining the product constraint with the stronger of the individual bounds
in (5.22) gives

\[
J_{m,d,r}=P_R+P_T
\leq C_s
=\frac{s}{2(a+s)}
+\frac{3as(t-s)}
       {4(a+s)(a+t)^2}.
\tag{5.24}
\]

For example, if \(r=s\) and \(d-r=t\), solve (5.23) for \(P_R\) to obtain

\[
P_R+P_T
\leq
\frac{s}{2(a+s)}
+\frac{a(t-s)}{t(a+s)}P_T;
\]

substitution of the top-side bound from (5.22) yields (5.24). The opposite
ordering follows by coordinate exchange.

There are two parameter ranges.

\textbf{Range I: \(a\geq d\).} Except at \((a,d)=(2,2)\), convexity and symmetry in
\(r\) give

\[
\frac1{(a+r)^2}+\frac1{(a+d-r)^2}
\leq
\frac{8d}
{(d+1)(a+d+1)(a+2d-1)}.
\tag{5.25}
\]

It is enough to check \(r=1\). Set \(p=a-d\geq0\) and \(t=d-2\geq0\).
After clearing positive denominators, the required difference is

\[
\begin{aligned}
&-54+90t+219t^2+130t^3+29t^4+2t^5\\
&\quad+p(288+816t+738t^2+271t^3+35t^4)\\
&\quad+p^2(300+544t+311t^2+57t^3)\\
&\quad+p^3(96+112t+32t^2)+p^4(10+6t).
\end{aligned}
\tag{5.26}
\]

If \(t\geq1\), the \(90t\) term dominates the sole negative constant. If
\(p\geq1\), the \(288p\) term does so. The only omitted pair is \(p=t=0\).
Summing (5.22), using (5.25) and

\[
\sum_{r=1}^{d-1}r(d-r)=\frac{d(d^2-1)}6,
\]

proves (5.17).

\textbf{Range II: \(1\leq a\leq d\).} Elementary maximization gives

\[
\frac{s(t-s)}{(a+t)^2}
\leq
\frac{d^2}{4(a+d)(2a+d)}.
\tag{5.27}
\]

Indeed the function

\[
s\longmapsto\frac{s(d-2s)}{(a+d-s)^2},
\qquad
0\leq s\leq\frac d2,
\]

has its maximum at

\[
s_*=\frac{d(a+d)}{4a+3d},
\]

and direct substitution gives the right-hand side of (5.27).

Define

\[
\kappa_{a,d}
=\frac{3d^2}{8(a+d)(2a+d)}.
\tag{5.28}
\]

Substitution into (5.24) yields the inequality

\[
\boxed{
\sum_{r=1}^{d-1}J_{m,d,r}
\leq\frac{d-1}{2}
-(1-\kappa_{a,d})\frac a2
\sum_{r=1}^{d-1}
\frac1{a+\min(r,d-r)}.
}
\tag{5.29}
\]

Jensen's inequality and

\[
\sum_{r=1}^{d-1}\min(r,d-r)
=\left\lfloor\frac{d^2}{4}\right\rfloor
\leq\frac{d^2}{4}
\]

imply

\[
\sum_{r=1}^{d-1}
\frac1{a+\min(r,d-r)}
\geq
\frac{d-1}{a+d^2/[4(d-1)]}.
\tag{5.30}
\]

The saving required to reach (5.17) is

\[
\frac{d-1}{2}-T_{a,d}
=\frac{(d-1)(a^2+3ad+d-1)}
       {2(a+d+1)(a+2d-1)}.
\tag{5.31}
\]

After clearing positive denominators, (5.29)--(5.31) reduce to the sign of

\[
\begin{aligned}
\mathcal N(a,d)
=d^2\bigl[
&-4a^4+(11d-29)a^3+(19d^2-43d+4)a^2\\
&+(4d^3-19d^2+12d-3)a-2d^3+2d^2
\bigr].
\end{aligned}
\tag{5.32}
\]

For \(a=1,d=t+4\), the bracket becomes

\[
48+92t+26t^2+2t^3>0.
\tag{5.33}
\]

For \(a=2,d=t+3\), it becomes

\[
56+342t+94t^2+6t^3>0.
\tag{5.34}
\]

For \(a=p+3,d=a+s\), it becomes

\[
\begin{aligned}
&72+912s+206s^2+10s^3\\
&\quad+p(834+1141s+161s^2+4s^3)\\
&\quad+p^2(801+462s+31s^2)
       +p^3(267+61s)+30p^4>0.
\end{aligned}
\tag{5.35}
\]

The three exceptional parameter pairs are handled directly:

\[
(a,d)=(1,2):
\qquad
\sum C_s=T_{1,2}=\frac14;
\tag{5.36}
\]

\[
(a,d)=(1,3):
\qquad
\mathcal N(1,3)=-180,
\qquad
\sum C_s=\frac7{12}<\frac35=T_{1,3};
\tag{5.37}
\]

\[
(a,d)=(2,2):
\qquad
J_{4,2,1}=\frac{31}{495}<\frac4{25}=T_{2,2}.
\tag{5.38}
\]

Only the last pair requires an exceptional explicitly solved Dirichlet
rectangle: its value follows from all 25 interior harmonic equations of the
\(6\times6\) rectangle.

Finally,

\[
\begin{aligned}
\sum_{j=1}^{d-1}\frac1{(m+j)^2}
&\geq
\frac{m}{m+1}
\sum_{j=1}^{d-1}
\frac1{(m+j-1)(m+j)}\\
&=\frac{d-1}{(m+1)(m+d-1)}.
\end{aligned}
\tag{5.39}
\]

Because \(m=a+d\), the right-hand side times \(d^2\) equals \(T_{a,d}\).
This proves (5.16) and completes the all-parameter argument.
\end{proof}

\hypertarget{complete-finite-path-pure-and-mixed-equilibria}{%
\section{Complete finite-path pure and mixed equilibria}\label{complete-finite-path-pure-and-mixed-equilibria}}

\begin{theorem}[Complete finite-path equilibrium classification]
\label{thm:path-equilibria}
For every finite path \(P_n\), the safe vertices are exactly its central
vertices, every central vertex strictly defeats every noncentral vertex, and
\[
\mathcal O(P_n)=\mathcal P(C_{P_n}),\qquad
\operatorname{PNE}(P_n)=C_{P_n}\times C_{P_n},\qquad
\operatorname{MNE}(P_n)
=\mathcal P(C_{P_n})\times\mathcal P(C_{P_n}).
\]
In particular, the odd path has one safe vertex and the even path has two.
The degenerate paths \(P_1\) and \(P_2\) are included.
\end{theorem}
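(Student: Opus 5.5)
The heart of the argument is strict centrality: for every center \(c\) of \(P_{N+1}\) and every noncentral \(b\), we show \(A_{P_{N+1}}(c,b)>0\). By the reflection \(x\mapsto N-x\) it suffices to take \(b<c\). Write \(c=m\), \(b=m-d\) with the parametrization (5.1), where \(\varepsilon\) encodes the parity and the choice of center. The degenerate paths \(P_1\) and \(P_2\) are checked directly: \(P_1\) is trivial, and on \(P_2\) both vertices are central and \(A\equiv 0\) by symmetry. For a center \(c\) and a noncentral \(b<c\) we have \(d\geq1\), and \(d\geq2\) is forced when \(\varepsilon=1\) (the upper center against a vertex below the lower center). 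Once strict centrality is known, the two centers of an even path give \(A(m-1,m)=0\) by the reflection symmetry of the path.

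By (5.2), \(2A(m,m-d)=O_{m,d,\varepsilon}+I_{m,d,\varepsilon}\). For \(d=1\) the interior sum is empty. Then (5.10) leaves at least one term \(h_{m+1}(m,m+1)>0\) when \(\varepsilon\leq0\), which is strictly positive by (4.6). The case \(\varepsilon=1,d=1\) is the center-versus-center case, which does not occur here.

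For \(d\geq2\), (5.14) gives \(|I|<\sum_{r=1}^{d-1}J_{m,d,r}\). Combining (5.10) with the barrier (4.6), each outer term \(h_{m+j+1}(m-d+j+1,m+j+1)\) is at least \(d^2/(m+j+1)^2\), because its argument has the form \(h_L(L-d,L)\) with \(L=m+j+1\). Every parity contributes at least the terms \(L=m+1,\dots,m+d-1\), so
\[
O\geq d^2\sum_{j=1}^{d-1}\frac1{(m+j)^2}.
\]
When \(m>d\), Theorem~\ref{thm:aggregate-rectangle} (5.16) then gives \(O\geq\sum_r J_{m,d,r}>|I|\), hence \(A>0\).

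The remaining case is \(m=d\), i.e.\ \(b=0\) is the endpoint, where (5.15) only gives \(J=1/2\). This is the step I expect to be the main obstacle. Here I would use the strict inequality (5.14) together with the stronger degree-six barrier (4.12) and the extra outer terms available when \(\varepsilon\leq0\), including the \(\mathscr D\) contributions, which vanish when \(a=m-d=0\). The aim is to beat \((d-1)/2\). The barrier yields \(O\geq\sum_L\bigl(d^2/L^2+\tfrac1{30}(d^2/L^2-d^6/L^6)\bigr)\) over \(L=d+1,\dots,2d-1\) (or one more term), which is roughly \(d\bigl(\tfrac12+\tfrac1{30}\cdot\text{const}\bigr)\) and exceeds \((d-1)/2\) asymptotically. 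For small \(d\) I would instead fall back on the paper's enumerated endpoint exceptions, solved exactly, as the introduction indicates.

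Given strict centrality, the classification is routine. A center \(c\) satisfies \(A(c,b)\geq0\) for all \(b\): the inequality is strict against noncentral \(b\), and \(A=0\) against the other center or itself. A noncentral \(a\) loses strictly to a center, so \(C_{P_n}\) is exactly the center. For mixed strategies, any \(\mu\in\mathcal P(C)\) lies in \(\mathcal O\). Conversely, if \(\mu\in\mathcal O\), test it against a center \(c\): \(\sum_a\mu(a)A(a,c)\geq0\). Every term with \(a\) central is \(0\) and every term with \(a\) noncentral is \(<0\), so \(\mu\) puts no mass off the center. Finally, (1.6) converts \(C\) and \(\mathcal O\) into the PNE and MNE sets.
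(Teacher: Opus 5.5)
Up to the endpoint case your argument is correct and follows the paper's. This covers the reduction to $b<c$ via (5.1), the case $d=1$ from a nonempty outer sum in (5.10), and the case $2\le d<m$ from (5.10), (4.6), the strict comparison (5.14) and (5.16). There the quadratic barrier suffices, and the paper invokes (4.12) only to obtain the margin $\Phi_m(d)/30$ reused in (7.5). The final classification by testing against a center also matches.

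\textbf{The gap: the upper even center against the far endpoint.} This is the case $d=m$ with $\varepsilon=1$, and you leave it at ``exceeds $(d-1)/2$ asymptotically''. Your estimate ``roughly $d(\frac12+\frac1{30}\cdot\mathrm{const})$'' conceals the real difficulty. With only the indices $L=m+1,\dots,2m-1$, the quadratic barrier gives only $O_{m,m,1}\ge m^2\sum_{L=m+1}^{2m-1}L^{-2}=m/2-5/8+O(1/m)$ (compare (6.5)). That falls short of $(m-1)/2$ by about $1/8$, so the degree-six term must beat a fixed deficit. Since $\frac1{30}\Phi_m(m)\approx\frac{49}{4800}m$ (compare (7.7)), this happens only once $m$ exceeds roughly a dozen. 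A proof therefore needs an explicit threshold and a covering of everything below it. The paper does this in three tiers:
\begin{enumerate}
\item For $m\ge24$, at least $16$ indices satisfy $4m/3\le L<2m$, each with $m^2/L^2-m^6/L^6>15/64$, so $\frac1{30}\Phi_m(m)>\frac18$.
\item For $12\le m\le23$, the exact rational bridge $B_m$ of (6.6a)--(6.7) is checked, because the cruder bound (6.6) may be negative there.
\item Only $2\le m\le11$ uses exact product-chain solutions.
\end{enumerate}
Without the first two tiers, ``small $d$'' is undefined. In particular, the range $12\le m\le23$ is not among the exact exceptions you propose to fall back on.

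\textbf{The fallback for $\varepsilon\le0$.} The paper's exact endpoint computations (6.7a) concern only even paths, so under your plan odd paths with small $m$ would be left uncovered. They need no exceptions at all. The extra index $L=2m$ gives $O_{m,m,0}\ge m^2\sum_{L=m+1}^{2m}L^{-2}\ge m^2/(2(m+1))=(m-1)/2+1/(2(m+1))$, as in (6.3). Combined with $-I_{m,m,\varepsilon}<(m-1)/2$ from (5.14)--(5.15), this settles $\varepsilon=0$, and a fortiori $\varepsilon=-1$, for every $m\ge2$ using the quadratic barrier alone.
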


\begin{proof}
For \(2\leq d\leq m-1\), (5.10), (4.12), (5.14), and (5.16) give

\[
\begin{aligned}
2\left(
U_{P_{N+1}}(m,m-d)-\frac{N+1}{2}
\right)
&>
\frac1{30}\Phi_m(d),\\
\Phi_m(d)
&=
\sum_{j=1}^{d-1}
\left(
\frac{d^2}{(m+j)^2}
-\frac{d^6}{(m+j)^6}
\right)
>0.
\end{aligned}
\tag{6.1}
\]

Every term of \(\Phi_m(d)\) has the displayed subtraction sign and is
positive. The strictness follows from each strict rectangle inclusion (5.14);
no pointwise conjectural rectangle inequality is needed.

When \(d=1\), the interior sum is empty. For odd paths and lower even centers
the outer sum is strictly positive. For an upper even center, the opponent at
distance one is the other center, and symmetry gives payoff exactly
\((N+1)/2\). In particular,

\[
U_{P_3}(1,0)-\frac32=\frac18>0.
\tag{6.2}
\]

For endpoint opponents, \(d=m\). If \(\varepsilon=0\), (4.6) and telescoping
give

\[
O_{m,m,0}
\geq
m^2\sum_{L=m+1}^{2m}\frac1{L^2}
\geq
\frac{m^2}{2(m+1)}
=\frac{m-1}{2}+\frac1{2(m+1)}.
\tag{6.3}
\]

For \(\varepsilon=-1\), one additional positive outer term is available.
These arguments use

\[
-I_{m,m,\varepsilon}<\frac{m-1}{2}
\tag{6.4}
\]

only when \(m\geq2\); the case \(m=d=1\) is treated directly by (6.2).
For the lower even center with \(m=d=1\), use the \(P_4\) value in (6.8);
the upper-center case \(m=d=1\) is \(P_2\), whose two vertices are both
central.

For the upper even center, \(\varepsilon=1\), convexity and the composite
trapezoid estimate yield

\[
\sum_{L=m+1}^{2m-1}\frac{m^2}{L^2}
\geq
\frac m2-\frac58.
\tag{6.5}
\]

The degree-six correction therefore gives

\[
2\left(
U_{P_{2m}}(m,0)-m
\right)
>
\frac1{30}\Phi_m(m)-\frac18.
\tag{6.6}
\]

For \(m\geq24\), at least 16 indices satisfy \(4m/3\leq L<2m\), and

\[
\frac{m^2}{L^2}-\frac{m^6}{L^6}
>\frac{15}{64};
\]

hence the correction exceeds \(1/8\). The cases \(12\leq m\leq23\) follow
from the following stronger bridge, rather than from the possibly negative
right-hand side of (6.6). Define

\[
\begin{aligned}
B_m
&=
\sum_{L=m+1}^{2m-1}\frac{m^2}{L^2}
+\frac1{30}
\sum_{L=m+1}^{2m-1}
\left(
\frac{m^2}{L^2}-\frac{m^6}{L^6}
\right)
-\frac{m-1}{2}.
\end{aligned}
\tag{6.6a}
\]

For \(\varepsilon=1\), the first sum in (5.10) contains exactly the indices
\(L=m+1,\ldots,2m-1\). Applying the degree-six barrier (4.12) to each of those
terms and using

\[
-I_{m,m,1}<\frac{m-1}{2}
\]

in (5.2) gives the strict estimate

\[
2\left(U_{P_{2m}}(m,0)-m\right)>B_m.
\tag{6.6b}
\]

Every \(B_m\) is a finite rational number. The supplied finite-path certificate
constructs each summand with Python's exact
\(\texttt{fractions.Fraction}\), subtracts \((m-1)/2\), and executes
\(\texttt{assert margin > 0}\) separately for each
\(m=12,13,\ldots,23\). It then prints all twelve reduced fractions. Their
minimum is attained at \(m=12\), where

\[
B_{12}
=\frac{
45099969471864349286182125482800145174460998792423
}{
9468373108857039328547559427556736346876985011200000
}>0.
\tag{6.7}
\]

The remaining \(2\leq m\leq11\) follow from exact product-chain calculations
for both even centers. For each \(m\), the same supplied certificate constructs
the complete transient product-chain matrix with the endpoint transition
weight doubled, solves it by fraction-free integer elimination, substitutes
the exact rational solution back into every harmonic equation, and asserts
positivity of

\[
U_{P_{2m}}(m,0)-m
\quad\text{and}\quad
U_{P_{2m}}(m-1,0)-m.
\tag{6.7a}
\]

It prints the complete twenty-entry table for \(2\leq m\leq11\). At \(P_4\),
the inequivalent left-endpoint advantages are

\[
U_{P_4}(2,0)-2=\frac3{182},
\qquad
U_{P_4}(1,0)-2=\frac{19}{104}.
\tag{6.8}
\]

Reflection supplies the right-endpoint cases after the two left-center rows
have been handled separately. Thus every path center strictly defeats every
noncentral opponent.

Every noncentral vertex \(b\) is excluded from \(C_{P_n}\) without requiring a
separate adjacent-sign theorem: take any center \(c\), for which the strict
victory just proved gives

\[
A_{P_n}(b,c)=-A_{P_n}(c,b)<0.
\]

Conversely all central vertices are safe, including their zero-payoff
matchups against one another. Therefore

\[
C_{P_{2m+1}}=\{m\},
\qquad
C_{P_{2m}}=\{m-1,m\}.
\tag{6.9}
\]

For \(P_1\), the unique center is \(0\). For \(P_2\), both vertices are
central and their mutual payoff is one.

To classify optimal mixed strategies, fix any center \(c\). If
\(\mu\in\mathcal O(P_n)\), then

\[
0
\leq
\sum_{b\in V(P_n)}
\mu(b)
\left(
U_{P_n}(b,c)-\frac n2
\right).
\tag{6.10}
\]

Every summand for a noncentral \(b\) is strictly negative, while every central
summand is zero. Hence \(\operatorname{supp}(\mu)\subseteq C_{P_n}\).
Conversely, every measure supported on the safe centers is optimal. Thus

\[
\mathcal O(P_n)=\mathcal P(C_{P_n}),
\tag{6.11}
\]

or, explicitly,

\[
\mathcal O(P_{2m+1})=\{\delta_m\},
\tag{6.12}
\]

\[
\mathcal O(P_{2m})
=\left\{
p\delta_{m-1}+(1-p)\delta_m:
0\leq p\leq1
\right\}.
\tag{6.13}
\]

Equations (1.6) and (6.9)--(6.13) give the complete pure and mixed
classifications, including \(P_1\) and \(P_2\):

\[
\operatorname{PNE}(P_{2m+1})=\{(m,m)\},
\qquad
\operatorname{PNE}(P_{2m})=\{m-1,m\}^2,
\tag{6.14}
\]

\[
\operatorname{MNE}(P_n)
=\mathcal P(C_{P_n})\times\mathcal P(C_{P_n}).
\tag{6.15}
\]
\end{proof}

\hypertarget{uniform-reflected-brownian-limit-and-cubic-equilibrium-stability}{%
\section{Uniform reflected-Brownian limit and cubic equilibrium stability}\label{uniform-reflected-brownian-limit-and-cubic-equilibrium-stability}}

Let \(B^{(x)}\) be reflected Brownian motion on \([0,1]\) with generator
\(\frac12\partial_{xx}\), started at \(x\), and let

\[
\sigma_z^{(x)}
=\inf\{t\geq0:B_t^{(x)}=z\}.
\]

For independent motions, define

\[
Q_z(x,y)
=\Pr\bigl(\sigma_z^{(x)}<\sigma_z^{(y)}\bigr)
+\frac12
 \Pr\bigl(\sigma_z^{(x)}=\sigma_z^{(y)}\bigr)
\tag{7.1}
\]

and the normalized continuum payoff

\[
F(x,y)=\int_0^1Q_z(x,y)\,dz.
\tag{7.2}
\]

\begin{theorem}[Uniform Brownian limit and cubic midpoint stability]
\label{thm:brownian-limit}
The payoff \(F\) is jointly continuous on \([0,1]^2\), and
\[
\sup_{0\leq a,b\leq N}
\left|
\frac{U_{P_{N+1}}(a,b)}{N+1}
-F\!\left(\frac aN,\frac bN\right)
\right|\longrightarrow0.
\]
Moreover, for every \(y\in[0,1]\),
\[
F(1/2,y)-1/2\geq |1-2y|^3/400.
\]
Consequently \(\delta_{1/2}\) is the unique optimal continuum strategy and
\((\delta_{1/2},\delta_{1/2})\) is the unique mixed equilibrium.
\end{theorem}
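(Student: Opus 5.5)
The proof has three parts: uniform convergence, cubic midpoint stability, and uniqueness of the continuum equilibrium. Throughout, the rate-one walk is $1$-Lipschitz in time scale. A jump of $\pm1$ at rate one has variance one per unit time, so under diffusive scaling $t\mapsto N^2t$, space $\mapsto 1/N$, the walk converges to a Brownian motion with generator $\tfrac12\partial_{xx}$. At the endpoints, reflection is handled by folding: the reflecting walk on $\{0,\dots,N\}$ is the image of a free walk on $\mathbb Z$ under the folding map with period $2N$, up to the endpoint rate convention.

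\textbf{Uniform convergence.} I would prove a varying-start, varying-target hitting lemma. Take $a_N/N\to x$, $b_N/N\to y$ and $z_N/N\to z$ with $z\notin\{x,y\}$ or $x\ne y$. Then
\[
q_{z_N}(a_N,b_N)\to Q_z(x,y).
\]
The argument runs as follows. By Skorokhod embedding (or Donsker with the continuous folding map), couple the rescaled walks to reflected Brownian motions uniformly on compact time intervals. Brownian motion crosses a level immediately after hitting it, and the reflected motion does the same at interior levels; at the boundary levels $0,1$ it hits with no positive-time gap. Hence hitting times are continuous functionals at almost every path, uniformly in targets converging to $z$. Moreover $\sigma^{(x)}_z$ and $\sigma^{(y)}_z$ have atomless laws when $x\ne y$, so the indicator of $\{\sigma^{(x)}<\sigma^{(y)}\}$ is a.s. continuous. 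Tightness of hitting times, via a uniform exponential tail from the bounded interval, lets one truncate time.

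Three degenerate situations need separate treatment. When $x=y$, the answer is $1/2$ by symmetry plus convergence: both players' probabilities tend to equality because the coupled limiting paths coincide. When $z=x$ (target near a start), only a neighborhood of measure $O(\eta)$ is affected, and its contribution is bounded by $2\eta$. Boundary targets are handled as noted above. Dominated convergence in $z$, with the Riemann sum $\frac1{N+1}\sum_z$ approximating $\int_0^1 dz$ (the summands are bounded by one), then gives pointwise convergence of the payoff along every convergent sequence.

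The same lemma applied to $x_n\to x$, $y_n\to y$ shows that $F$ is jointly continuous. The uniform statement then follows by contradiction. Pick a subsequence with a sup-gap at least $\epsilon$, extract convergent $a_N/N$ and $b_N/N$ by compactness of $[0,1]^2$, and apply sequential convergence together with continuity of $F$. I expect this hitting lemma to be the main obstacle, in particular the uniformity when the target coincides with a start or sits at a reflecting boundary.

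\textbf{Cubic stability.} Take $n=N+1$ odd, $m=N/2$ and $b=m-d$, and pass to the limit in (6.1) together with (5.2). For fixed $y<1/2$ with $d/m\to 1-2y=:\beta$, the right side of (6.1) divided by $2(N+1)$ satisfies
\[
\frac{1}{60N}\Phi_m(d)\;\to\;\frac1{120}\int_1^{1+\beta}\Bigl(\frac{\beta^2}{s^2}-\frac{\beta^6}{s^6}\Bigr)ds,
\]
and this limit is a lower bound for $F(1/2,y)-1/2$. The case $d=m$ ($y=0$) is covered in the limit as well. For $y>1/2$, use the reflection $F(1/2,y)=F(1/2,1-y)$. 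Elementary calculus then bounds the integral below by $\beta^3\cdot 3/10$ on $[0,1]$. For small $\beta$ the integrand is $\approx\beta^2(1-\beta^4)$ over a length-$\beta$ interval, giving $\approx\beta^3$, so the constant $1/400$ leaves ample room.

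\textbf{Uniqueness.} If $\nu$ is an optimal continuum strategy, then $\int(F(y,1/2)-1/2)\,d\nu(y)\ge0$. By skew-symmetry and the cubic bound, the integrand is at most $-|1-2y|^3/400<0$ off $y=1/2$, so $\nu=\delta_{1/2}$. That $\delta_{1/2}$ is optimal follows from $F(1/2,\cdot)\ge1/2$. The mixed equilibria then follow as in (1.6).
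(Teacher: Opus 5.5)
Your overall route matches the paper's. You use folding plus the two-dimensional invariance principle, hitting-time continuity via immediate crossing, excision of target neighborhoods of the starts, a Riemann-sum limit, joint continuity of $F$, and a compactness contradiction for uniformity. You then take the Riemann-sum limit of (6.1) to get $\Psi(\theta)/120$ with $\Psi(\theta)\geq 3\theta^3/10$, which yields exactly $1/400$, and test against $\delta_{1/2}$ using skew symmetry. Your claim that $y=0$ is ``covered in the limit'' is valid: take $d=m-1$ in (6.1), or use continuity of $F$. This is simpler than the paper's appeal to (6.6).

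The step that fails as written is the coincident-limit case $x=y$. You restrict the no-tie argument to $x\neq y$, and then assert the limit is $1/2$ ``because the coupled limiting paths coincide.'' They do not coincide. The two walks are independent, so the rescaled pair converges to two \emph{independent} reflected Brownian motions that merely share a starting point. If the limiting paths did coincide, the limiting race would be tied almost surely. Ties are precisely the discontinuity set of $(s,t)\mapsto\mathbf{1}\{s<t\}$, so the continuous-mapping step would give no information about $q_{z_N}(a_N,b_N)$. Nor does the identity $q_z(a,b)+q_z(b,a)=1$ by itself force $q_{z_N}(a_N,b_N)\to 1/2$ when $a_N\neq b_N$.

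The missing observation is that $\Pr(\sigma_z^{(x)}=\sigma_z^{(y)})=0$ follows from independence and atomless marginals for every $z\notin\{x,y\}$, whether or not $x=y$. So your main argument applies verbatim, and the lemma's hypothesis should simply be $z\notin\{x,y\}$. Exchangeability of the two i.i.d.\ limits then gives $Q_z(x,x)=1/2$. This case cannot be skipped, because near-diagonal start pairs occur in the uniform supremum; the paper addresses it explicitly in Lemma~\ref{lem:varying-passage}.

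Relatedly, your boundary-target remark is too vague. A reflected path cannot cross $0$ or $1$, so crossing-based continuity must be applied to the unfolded paths, where the preimage is an even or odd integer that Brownian motion crosses immediately; this is what the paper does. Alternatively, write the Riemann sum as the integral of a step function and use dominated convergence. Then boundary targets form a null set and are never needed.
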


The following first-passage lemma and the ensuing estimates prove the theorem.

\hypertarget{varying-start-and-varying-target-first-passage-lemma}{%
\subsection{Varying-start and varying-target first-passage lemma}\label{varying-start-and-varying-target-first-passage-lemma}}

Define the triangular folding map

\[
\rho(u)=\min_{j\in2\mathbb Z}|u-j|,
\qquad
u\in\mathbb R.
\tag{7.3a}
\]

It is continuous, one-Lipschitz, \(2\)-periodic, and takes values in
\([0,1]\). If \(S\) is a rate-one continuous-time simple symmetric walk on
\(\mathbb Z\), then

\[
X_t^{(N,a)}
=N\rho\left(\frac{a+S_t}{N}\right)
\tag{7.3b}
\]

is exactly the constant-speed reflecting walk on
\(\{0,\ldots,N\}\) started from \(a\). At an interior vertex the two
unfolded increments give the two neighbors with probability \(1/2\). At
\(0\) or \(N\), both unfolded increments fold onto the unique neighbor, so
the total endpoint jump rate remains one.

\begin{lemma}[Varying starts and targets]\label{lem:varying-passage}
Suppose

\[
\frac{a_N}{N}\longrightarrow x,
\qquad
\frac{b_N}{N}\longrightarrow y,
\qquad
\frac{z_N}{N}\longrightarrow z\notin\{x,y\}.
\tag{7.3c}
\]

Then

\[
q_{z_N}^{(N)}(a_N,b_N)
\longrightarrow
Q_z(x,y).
\tag{7.3d}
\]

This remains valid when \(x=y\), and when \(z=0\) or \(z=1\).
\end{lemma}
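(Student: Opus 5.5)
We will prove the lemma by a coupling of the folded walk with Brownian motion, followed by a continuous-mapping argument for hitting times.

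\textbf{Step 1: joint invariance principle.} Take independent rate-one simple walks \(S,S'\) on \(\mathbb Z\) and represent the two reflecting walks through (7.3b):
\[
X^{(N,a_N)}_t=N\rho\bigl((a_N+S_t)/N\bigr),
\qquad
X^{(N,b_N)}_t=N\rho\bigl((b_N+S'_t)/N\bigr).
\]
Under diffusive time scaling \(t=N^2s\), Donsker's theorem together with \(a_N/N\to x\) and \(b_N/N\to y\) gives
\[
\bigl((a_N+S_{N^2s})/N,\;(b_N+S'_{N^2s})/N\bigr)_{s\geq0}\Rightarrow (x+W_s,\;y+W'_s)
\]
in \(C\)-topology after linear interpolation, or in Skorokhod \(J_1\). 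Since \(\rho\) is one-Lipschitz, the folded processes converge jointly to \((\rho(x+W),\rho(y+W'))\). These are independent reflected Brownian motions started at \(x,y\) with generator \(\tfrac12\partial_{xx}\). By Skorokhod representation we may assume almost sure locally uniform convergence.

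\textbf{Step 2: hitting-time continuity with a moving target.} Rescaled, the discrete hitting time of \(z_N\) is \(\tau_N=\inf\{s:\rho((a_N+S_{N^2s})/N)=z_N/N\}\). The claim is \(\tau_N\to\sigma_z^{(x)}\) almost surely, and likewise for the second walk. For the lower semicontinuity side, fix \(s<\sigma_z\). Since \(z\neq x\), the limit path stays a positive distance from \(z\) on \([0,s]\), and uniform convergence plus \(z_N/N\to z\) keeps the discrete path off \(z_N\). For the upper side, use that Brownian motion started at \(z\) immediately crosses \(z\). Almost surely, for every \(\eta>0\) the limit path \(\rho(x+W)\) exceeds and falls below \(z\) within \([\sigma_z,\sigma_z+\eta]\); by the strong Markov property applied at \(\sigma_z\), this holds a.s. Uniform convergence then forces the discrete path, which moves by nearest-neighbour steps, to cross the level \(z_N\) and hence hit it, giving \(\limsup\tau_N\leq\sigma_z\).

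\textbf{Step 3: the boundary case \(z\in\{0,1\}\), the main obstacle.} Here the folded path only touches \(z\), so two-sided crossing fails. Instead we work with the unfolded path. At \(z=0\), \(\sigma_0\) is the first time \(x+W\) hits \(2\mathbb Z\), an interior level for the unfolded path, so two-sided crossing applies there. The discrete hitting time of \(z_N\) is the first time \(a_N+S\) hits \(z_N+2N\mathbb Z\) or \(-z_N+2N\mathbb Z\). For \(z_N/N\to0\) these target sets converge to \(2\mathbb Z\), and the Step 2 argument transfers verbatim to the unfolded level sets. Endpoint targets such as \(z=1\) are handled the same way using the lattice \(1+2\mathbb Z\).

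\textbf{Step 4: ties and the case \(x=y\).} Because \(\sigma_z^{(x)}\) and \(\sigma_z^{(y)}\) are independent and atomless (since \(z\notin\{x,y\}\)), \(\Pr(\sigma_z^{(x)}=\sigma_z^{(y)})=0\). The discrete tie term also vanishes, since distinct starts give no positive-time ties and equal starts are excluded for target \(z_N\) eventually. Almost sure convergence of \((\tau_N,\tau'_N)\) to a limit pair avoiding the diagonal gives convergence of \(\mathbf 1\{\tau_N<\tau'_N\}\), and bounded convergence yields (7.3d). Nothing in this argument requires \(x\neq y\): when \(x=y\) the independence of \(W\) and \(W'\) still gives \(\Pr(\sigma=\sigma')=0\), so \(Q_z(x,x)=1/2\) is recovered automatically.
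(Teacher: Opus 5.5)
Your proposal is correct and follows essentially the same route as the paper. Both arguments use Donsker's theorem for the unfolded walks and folding by the Lipschitz map \(\rho\). Both get hitting-time continuity from the immediate-crossing property of the limit, passing to the unfolded target lattice \(2\mathbb Z\) or \(1+2\mathbb Z\) for endpoint targets. Both rule out ties because the limiting passage times are independent and atomless.

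The differences are cosmetic. You use Skorokhod representation and bounded convergence where the paper uses the continuous-mapping theorem with truncation at a deterministic time. You also treat interior targets on the folded path rather than the unfolded preimage set. You should add the one-line justification for atomlessness that the paper gives: the reflected hitting time is the exit time of ordinary Brownian motion from a bounded interval, which has a density.
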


\begin{proof}
Let \(S^{(1)}\) and \(S^{(2)}\) be independent rate-one simple
walks. The two-dimensional invariance principle gives

\[
\left(
\frac{a_N+S^{(1)}_{N^2t}}{N},
\frac{b_N+S^{(2)}_{N^2t}}{N}
\right)_{t\geq0}
\Longrightarrow
\left(x+W_t^{(1)},y+W_t^{(2)}\right)_{t\geq0},
\tag{7.3e}
\]

where \(W^{(1)},W^{(2)}\) are independent standard Brownian motions.
Continuity of \(\rho\) and (7.3b) therefore imply convergence, uniformly on
compact time intervals in the continuous-limit topology, to two independent
reflected Brownian motions

\[
\rho(x+W^{(1)}),
\qquad
\rho(y+W^{(2)}).
\tag{7.3f}
\]

This also proves independence when \(x=y\): the initial positions may
coincide, but the unfolded increments and limiting Brownian motions remain
independent.

We record the hitting-time continuity used next. If continuous paths
\(\omega_n\) converge uniformly on compact intervals to \(\omega\), levels
\(c_n\to c\), and \(\omega\) crosses \(c\) immediately after its first hit,
then the first hitting times of \(c_n\) converge to the first hitting time of
\(c\). For the linearly interpolated nearest-neighbor walks, crossing a
lattice level forces an exact visit to that level. The maximal interpolation
error on a fixed diffusive time interval tends to zero in probability, so the
same statement applies to (7.3e).

For an interior reflected target \(z\in(0,1)\), the unfolded target set is

\[
\rho^{-1}(z)=\{2j+z,2j-z:j\in\mathbb Z\}.
\tag{7.3g}
\]

At its first hit, Brownian motion almost surely visits both sides of the hit
level in every subsequent time interval. Thus the required crossing
condition holds almost surely. At \(z=0\) and \(z=1\), the reflected path
cannot cross the endpoint, but the unfolded Brownian path crosses,
respectively, an even or odd integer immediately after first contact. Applying
the same argument to the unfolded target set proves endpoint-target
continuity as well.

Because \(z\notin\{x,y\}\), both limiting first-passage times are strictly
positive and finite almost surely. Moreover, the preimages in (7.3g), or the
even/odd integer preimages at an endpoint, bound the lifted starting point in
a finite interval. The reflected hitting time is therefore the exit time of
ordinary Brownian motion from that interval. Its law has a continuous
density. The two limiting hitting times are independent and atomless, so

\[
\Pr\bigl(\sigma_z^{(x)}=\sigma_z^{(y)}\bigr)=0.
\tag{7.3h}
\]

The continuous-mapping theorem applied jointly to the two passage times,
followed by (7.3h), proves convergence of the race indicator. First truncate
at a deterministic time \(T\); then let \(T\to\infty\), using almost-sure
finiteness of both limiting passage times. This proves (7.3d), including
coincident limiting starts and endpoint targets.
\end{proof}

The same coupling proves a continuum continuity statement that will also be
needed below:

\[
(x_n,y_n,z_n)\to(x,y,z),
\qquad
z\notin\{x,y\}
\quad\Longrightarrow\quad
Q_{z_n}(x_n,y_n)\to Q_z(x,y).
\tag{7.3i}
\]

Indeed, use common independent Brownian increments in (7.3f) and repeat the
almost-sure hitting-time argument. Thus \(Q\) is jointly continuous on

\[
\mathcal D
=\{(x,y,z)\in[0,1]^3:z\notin\{x,y\}\}.
\tag{7.3j}
\]

\hypertarget{joint-continuity-of-the-integrated-payoff}{%
\subsection{Joint continuity of the integrated payoff}\label{joint-continuity-of-the-integrated-payoff}}

Let \((x_n,y_n)\to(x,y)\). For every
\(z\notin\{x,y\}\), (7.3i) gives

\[
Q_z(x_n,y_n)\longrightarrow Q_z(x,y).
\]

The exceptional target set has Lebesgue measure zero and \(0\leq Q\leq1\).
Dominated convergence in (7.2) therefore proves

\[
F(x_n,y_n)\longrightarrow F(x,y).
\tag{7.3k}
\]

Hence \(F\) is jointly continuous on the whole square \([0,1]^2\), including
the diagonal \(x=y\) and boundary starting positions.

\hypertarget{uniform-two-start-convergence}{%
\subsection{Uniform two-start convergence}\label{uniform-two-start-convergence}}

We first prove convergence along arbitrary convergent start sequences. Suppose

\[
\frac{a_N}{N}\to x,
\qquad
\frac{b_N}{N}\to y.
\tag{7.3l}
\]

Fix \(\eta>0\), and remove the target neighborhoods

\[
\mathcal V_\eta
=\bigl((x-\eta,x+\eta)\cup(y-\eta,y+\eta)\bigr)\cap[0,1].
\tag{7.3m}
\]

On the compact complement of \(\mathcal V_\eta\),
Lemma~\ref{lem:varying-passage} and the joint
continuity (7.3i) are uniform over target grid points. Otherwise there would
be a violating sequence \(z_N/N\) in that compact set; a convergent
subsequence would contradict (7.3d) and (7.3i). Consequently

\[
\sup_{\substack{0\leq j\leq N\\j/N\notin\mathcal V_\eta}}
\left|
q_j^{(N)}(a_N,b_N)-Q_{j/N}(x,y)
\right|
\longrightarrow0.
\tag{7.3n}
\]

Both integrands lie in \([0,1]\), while the proportion of target grid points
inside \(\mathcal V_\eta\) is at most

\[
4\eta+O(N^{-1}).
\tag{7.3o}
\]

On the complement, \(z\mapsto Q_z(x,y)\) is continuous, so its grid average
is a Riemann sum. Equations (7.3n)--(7.3o), followed by
\(\eta\downarrow0\), give

\[
\frac{U_{P_{N+1}}(a_N,b_N)}{N+1}
\longrightarrow F(x,y).
\tag{7.3p}
\]

The two endpoint targets could alternatively be discarded at total cost at
most \(2/(N+1)\); Lemma~\ref{lem:varying-passage} already treats them
directly.

Now suppose the desired uniform convergence failed. There would exist
\(\epsilon>0\), integers \(N_k\to\infty\), and starts \(a_k,b_k\) such that

\[
\left|
\frac{U_{P_{N_k+1}}(a_k,b_k)}{N_k+1}
-F\left(\frac{a_k}{N_k},\frac{b_k}{N_k}\right)
\right|
\geq\epsilon.
\tag{7.3q}
\]

Compactness supplies a subsequence on which
\(a_k/N_k\to x\) and \(b_k/N_k\to y\). Along that subsequence, (7.3p)
sends the first term to \(F(x,y)\), while joint continuity (7.3k) sends the
second term to the same limit. This contradicts (7.3q). Therefore

\[
\boxed{
\lim_{N\to\infty}
\sup_{0\leq a,b\leq N}
\left|
\frac{U_{P_{N+1}}(a,b)}{N+1}
-F\left(\frac aN,\frac bN\right)
\right|
=0.
}
\tag{7.3}
\]

No convergence rate is claimed.

For completeness, the product Dirichlet problem has the explicit separated
series

\[
Q_z(x,y)
=\sum_{j=0}^{\infty}
\frac{2(-1)^j}{\beta_j}
\cos\left(\frac{\beta_j w}{L_y}\right)
\frac{\cosh(\beta_j u/L_y)}
     {\cosh(\beta_j L_x/L_y)},
\qquad
\beta_j=\left(j+\frac12\right)\pi,
\tag{7.4}
\]

when the distances from the reflecting endpoints and the reflecting-side
lengths of the two starting positions are \((u,L_x)\) and \((w,L_y)\),
respectively. For a starting
position \(s<z\), these are

\[
(u,L)=(s,z);
\]

for \(s>z\), they are

\[
(u,L)=(1-s,1-z).
\]

The zero-target boundary in the first coordinate and reflecting far boundaries
produce the half-integer modes in (7.4).

Let

\[
\theta=|1-2y|\in[0,1].
\]

For interior opponents, take sequences of path centers and opponent vertices
whose scaled locations converge to \(1/2\) and \(y\). The degree-six improvement
(6.1) and a Riemann-sum limit give

\[
F\left(\frac12,y\right)-\frac12
\geq\frac{\Psi(\theta)}{120},
\tag{7.5}
\]

where

\[
\begin{aligned}
\Psi(\theta)
&=
\int_0^\theta
\left(
\frac{\theta^2}{(1+t)^2}
-\frac{\theta^6}{(1+t)^6}
\right)\,dt\\
&=
\frac{\theta^3}{1+\theta}
-\frac{\theta^6}{5}
\left(
1-\frac1{(1+\theta)^5}
\right).
\end{aligned}
\tag{7.6}
\]

At \(y=0,1\), use (6.6) instead; its fixed correction \(1/8\) vanishes after
normalization and passage to the limit. In particular,

\[
\Psi(1)=\frac{49}{160}.
\tag{7.7}
\]

For \(0\leq\theta\leq1\),

\[
\frac{\theta^3}{1+\theta}\geq\frac{\theta^3}{2},
\qquad
\frac{\theta^6}{5}
\left(1-\frac1{(1+\theta)^5}\right)
\leq\frac{\theta^3}{5},
\]

and hence

\[
\boxed{
F\left(\frac12,y\right)-\frac12
\geq
\frac{|1-2y|^3}{400}.
}
\tag{7.8}
\]

The inequality is strict whenever \(y\neq1/2\). Skew symmetry gives

\[
F(x,1/2)-\frac12
\leq-\frac{|1-2x|^3}{400}.
\tag{7.9}
\]

Testing any optimal probability measure against the pure midpoint therefore
forces it to be concentrated at \(1/2\). Conversely the midpoint is safe by
(7.8). The unique continuum optimal strategy and mixed Nash equilibrium are

\[
\mathcal O_{\mathrm{cont}}=\{\delta_{1/2}\},
\qquad
\operatorname{MNE}_{\mathrm{cont}}
=\{(\delta_{1/2},\delta_{1/2})\}.
\tag{7.10}
\]

If \(\mu\) is \(\eta\)-optimal in normalized payoff, (7.9) implies

\[
\int_0^1|1-2x|^3\,d\mu(x)\leq400\eta
\tag{7.11}
\]

and therefore

\[
\mu\bigl(\{|x-1/2|\geq\rho\}\bigr)
\leq\frac{50\eta}{\rho^3}.
\tag{7.12}
\]

For the ordinary unilateral-deviation definition of an additive
\(\varepsilon\)-Nash equilibrium, each marginal is \(2\varepsilon\)-optimal,
so the corresponding constants are \(800\) and \(100\).

\hypertarget{unique-continuum-endpoint-optimizer}{%
\section{Unique continuum endpoint optimizer}\label{unique-continuum-endpoint-optimizer}}

\begin{theorem}[Unique endpoint response and certified localization]
\label{thm:endpoint-optimizer}
The continuum endpoint payoff \(H(x)=F(x,0)\) has a unique global maximizer
\(x_*\), satisfying
\[
0.363424228<x_*<0.363424229,
\qquad
0.57155664213<H(x_*)<0.57155664221.
\]
If \(a_N\) is any endpoint best response on \(P_{N+1}\), then
\(a_N/N\to x_*\).
\end{theorem}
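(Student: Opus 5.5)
The proof has three stages: a global localization of all maximizers of \(H\), a local analytic argument giving uniqueness and the numerical enclosure, and the transfer to finite paths via Theorem~\ref{thm:brownian-limit}.

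\textbf{Representation of \(H\).} Write \(H(x)=\int_0^1 Q_z(x,0)\,dz\) and split the integral at the moving point \(z=x\). For \(z<x\) the opponent starting at \(0\) lies on the reflecting side of the target. For \(z>x\) both walkers approach the target from the same side, and the opponent must pass \(x\)'s side first. Using (7.4), each \(Q_z(x,0)\) is an explicit separated series. I would replace the \(\cosh\)-ratio expansion, which converges badly near the diagonal \(z=x\), by an image (method-of-images) expansion for the first-passage densities of Brownian motion in the relevant interval. This gives \(Q_z(x,0)=\int_0^\infty f_{x\to z}(t)\,\bar F_{0\to z}(t)\,dt\), with both factors given by theta-type image sums. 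Uniform exponential domination of the image tails for \(t\) bounded away from \(0\), together with Gaussian decay of the leading image for small \(t\), justifies integrating in \(z\) and differentiating in \(x\) under the integral. Differentiating across the moving diagonal \(z=x\) produces boundary terms \(\pm Q_{x^\pm}(x,0)\). These cancel because \(Q_z(x,0)\to 1\) from both sides (the first player starts at the target), so \(H'\) and \(H''\) are expressed by absolutely convergent integrals with no boundary terms.

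\textbf{Global localization.} By continuity of \(F\) (7.3k), \(H\) attains its maximum on \([0,1]\). I would exclude a neighbourhood of \(x=0\) and all \(x\geq 1/2\) by crude bounds. Near \(0\) we have \(H(x)\approx 1/2\), while \(H(x_*)>0.57\). For \(x\) beyond a threshold, the payoff is dominated via the harmonic barriers of Section~4 in the limit, or by rigorously enclosed evaluation of the series on a coarse grid combined with a Lipschitz bound for \(H\), which follows from the derivative formula. The result confines every maximizer to a compact interval \(I\), say \([0.3,0.42]\).

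\textbf{Strict concavity and certification.} On \(I\) I would prove \(H''<0\) by bounding the image-expansion integral. Truncate after finitely many images with an explicit tail remainder, and evaluate the remaining smooth integrals by a quadrature rule with an explicit error term, using outward-rounded interval arithmetic. This makes \(H\) strictly concave on \(I\), so the maximizer \(x_*\) is unique. Certified signs \(H'(0.363424228)>0>H'(0.363424229)\) then locate \(x_*\). Concavity gives \(H(x_*)\leq H(x_0)+H'(x_0)(x_*-x_0)\) for a grid point \(x_0\), and the lower bound comes from direct evaluation. Together these produce the value enclosure.

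\textbf{Finite paths.} By Theorem~\ref{thm:brownian-limit}, \(\sup_a|U_{P_{N+1}}(a,0)/(N+1)-H(a/N)|\to0\). If \(a_N\) is a best response and some subsequence satisfies \(a_N/N\to x'\neq x_*\), then comparing with the vertex \(\lfloor x_*N\rfloor\) gives \(H(x')\geq H(x_*)\). This contradicts uniqueness of the maximizer, so \(a_N/N\to x_*\).

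\textbf{Main obstacle.} I expect the hardest part to be the rigorous error control in the concavity step. The integrand is singular near the diagonal \(z=x\) and near \(t=0\), so quadrature remainders must be set up on regularized pieces. I would first try to integrate the leading image in \(z\) in closed form (error functions) and apply quadrature only to the smooth remainder.
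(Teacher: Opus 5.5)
Your overall architecture matches the paper's: localize all maximizers, prove strict concavity on the localization interval, certify opposite signs of \(H'\) at the two enclosure points, and transfer to finite paths. Your finite-path argument is the paper's. The gap is in the representation step. You claim that once the first-order diagonal terms \(\pm Q_{x^\pm}(x,0)\) cancel, \(H''\) is also free of boundary terms. That is false. Differentiating \(H'(x)=\int_0^x\partial_xQ_z(x,0)\,dz+\int_x^1\partial_xQ_z(x,0)\,dz\) across the moving diagonal produces
\[
\lim_{z\uparrow x}\partial_xQ_z(x,0)-\lim_{z\downarrow x}\partial_xQ_z(x,0),
\]
which is the jump of the first-passage flux at the target. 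The first limit is negative and the second positive, because the first walker's win probability decreases as it moves away from the target on either side. So this term is strictly negative, of order \(1/x\), and nothing cancels it.

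Concretely, on the right piece \(R(x)=\int_x^1 g(x/z)\,dz\) one gets \(R''(x)=-g'(1)/x+\int_x^1 g''(x/z)\,z^{-2}\,dz\). The integral alone equals \((g'(1)-g'(x))/x>0\), while the true value is \(-g'(x)/x<0\). On the left, the leading image \(\chi(x/z-1)\) contributes a diagonal term \(-1/x\) and an integral \(+1/x\); its total contribution \(x(1-B_0)\) is linear. Near \(x_*\) the true \(H''\) is about \(-1\), whereas the boundary-free integral you plan to certify is about \(+4\). Carried out as written, the concavity step would produce a positive quantity and prove nothing. Dominated convergence cannot justify the interchange you assume: for \(z\) between \(x\) and \(x+h\), the difference quotient of \(\partial_xQ_z(\cdot,0)\) has size \(1/h\), and these strips contribute exactly the jump.

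The missing idea is to use scale-invariant variables instead of working across the diagonal. The substitution \(r=x/z\) on the right gives \(R(x)=x\int_x^1 g(r)\,r^{-2}\,dr\). The substitution \(z=x/(1+t)\) on the left, combined with the paired image expansion (8.13) and the integration-by-parts identity (8.19), gives (8.20)--(8.22). Then \(H''\) is the pointwise closed form (8.22), and its dominant term \(-g'(x)/x\) is precisely the combined diagonal flux.

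With that form, concavity needs no quadrature at all. It uses only the alternating-series bound \(g'\geq D\), the first-image bound \(C\), and four cell checks (8.34). So the ``main obstacle'' you anticipate disappears. Quadrature with Simpson remainders is needed only for \(H'\) at the two enclosure endpoints, through \(J\), \(B_0\), and \(J_{\ell,\pm}\).

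A smaller point concerns localization. The Section~4 barriers bound the \emph{closer} walker's win probability from below, so they do not give upper bounds on \(F(x,0)\) for large \(x\). Your grid-plus-Lipschitz fallback is workable. The paper instead uses the monotone cell bound (8.5), which needs no derivative control outside \([1/4,1/2]\).
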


The proof combines global interval localization, an image-series
representation valid across the moving diagonal, rigorous tail bounds, and
strict concavity on the localization interval.

Define

\[
H(x)=F(x,0),
\qquad
0\leq x\leq1.
\tag{8.1}
\]

Splitting target locations at \(z=x\) gives

\[
H(x)=L(x)+R(x),
\qquad
R(x)
=\int_x^1g(x/z)\,dz
=x\int_x^1\frac{g(r)}{r^2}\,dr,
\tag{8.2}
\]

where

\[
g(r)
=\sum_{j=0}^{\infty}
\frac{2(-1)^j}{\beta_j}
\frac{\cosh(\beta_j r)}{\cosh\beta_j},
\qquad
\beta_j=\left(j+\frac12\right)\pi.
\tag{8.3}
\]

\hypertarget{global-localization-and-alternating-series-validity}{%
\subsection{Global localization and alternating-series validity}\label{global-localization-and-alternating-series-validity}}

For \(0\leq x\leq1\), the positive alternating-series terms are decreasing in
their spectral parameter. Indeed, with

\[
b_\beta(x)
=\frac{\cosh(\beta x)}{\beta\cosh\beta},
\]

direct differentiation gives

\[
\frac{\partial}{\partial\beta}\log b_\beta(x)
=x\tanh(\beta x)-\tanh\beta-\frac1\beta
<0.
\tag{8.4}
\]

The strict inequality follows from \(x\leq1\), monotonicity of \(\tanh\), and
\(\beta>0\). This supplies the elementary justification required for every
alternating enclosure of \(g\).

For a mesh cell \([a,b]\), first-passage monotonicity on target positions to the
left of \(a\) and to the right of \(b\), together with the trivial bound one
on the intervening target interval, gives

\[
\sup_{x\in[a,b]}H(x)
\leq
L(a)+(b-a)+R(b).
\tag{8.5}
\]

The independent global-localization certificate uses outward rational
enclosures of (8.3) and (8.5) to prove

\[
H(3/8)>0.570193446836186350,
\tag{8.6}
\]

\[
\sup_{0\leq x\leq1}H(x)<0.576578462545287353,
\tag{8.7}
\]

\[
\sup_{x\notin[1/4,1/2]}H(x)
<0.569147328628688909.
\tag{8.8}
\]

Consequently

\[
0.57<\max_{0\leq x\leq1}H(x)<0.58,
\qquad
\operatorname{argmax}H\subset(1/4,1/2).
\tag{8.9}
\]

\hypertarget{image-series-and-the-moving-diagonal}{%
\subsection{Image series and the moving diagonal}\label{image-series-and-the-moving-diagonal}}

Write

\[
f(u)=\operatorname{sech}\left(\frac{\pi u}{2}\right),
\qquad
\chi(u)=\frac4\pi
\arctan\left(e^{-\pi u/2}\right),
\qquad
\chi'(u)=-f(u).
\tag{8.10}
\]

For \(x\in[1/4,1/2]\), set

\[
b=\frac{1-x}{x}\geq1,
\qquad
t=\frac{x}{z}-1
\quad\text{for }0<z<x,
\tag{8.11}
\]

and

\[
A_\ell=2\ell b,
\qquad
C_{\ell,\pm}=2\ell(b+1)\pm1=\frac{2\ell}{x}\pm1.
\tag{8.12}
\]

Expanding the hyperbolic denominators geometrically in (7.4), integrating the
resulting exponentials, and grouping adjacent images gives

\[
Q_{x/(1+t)}(x,0)
=\chi(t)
+\sum_{\ell=1}^{\infty}
(-1)^{\ell-1}
\left[
\chi(A_\ell+C_{\ell,-}t)
-\chi(A_\ell+C_{\ell,+}t)
\right].
\tag{8.13}
\]

Here the derivation can be made explicit before any interchange at the
diagonal. For \(z=x/(1+t)<x\), the two reflecting-side ratios in (7.4) are
\[
P=b(1+t),
\qquad
D=b+(b+1)t,
\qquad
D-P=t.
\]
For \(t>0\), the absolutely convergent geometric expansion is
\[
\frac{\cosh(\beta P)}{\cosh(\beta D)}
=\sum_{n=0}^{\infty}(-1)^n
\left(e^{-\beta((2n+1)D-P)}
     +e^{-\beta((2n+1)D+P)}\right).
\]
The first term is \(e^{-\beta t}\). Pairing the second term of index
\(n=\ell-1\) with the first term of index \(n=\ell\) gives
\[
(-1)^{\ell-1}
\left(
e^{-\beta(A_\ell+C_{\ell,-}t)}
-e^{-\beta(A_\ell+C_{\ell,+}t)}
\right).
\]
Finally,
\[
\sum_{j=0}^{\infty}\frac{2(-1)^j}{\beta_j}e^{-\beta_j u}
=\frac4\pi\arctan\!\left(e^{-\pi u/2}\right)
=\chi(u),
\qquad u>0,
\]
which proves (8.13) for \(t>0\). At \(t=0\), the paired image brackets
vanish and the leading alternating series has its Abel-continuous limit
\(\chi(0)=1\).

At the diagonal \(t=0\), every bracket vanishes and \(\chi(0)=1\).
Consequently (8.13) handles the boundary layer at the moving target \(z=x\)
without differentiating a nonuniform spectral series term by term.

For \(q=0,1,2\), differentiation of the image arguments and exponential decay
give constants \(K_q<\infty\), independent of \(\ell,b,t\), such that

\[
\left|
\partial_b^q
\chi(A_\ell+C_{\ell,\pm}t)
\right|
\leq
K_q\ell^q(1+t)^q
\exp\left[
-\frac{\pi}{2}
(A_\ell+C_{\ell,\pm}t)
\right].
\tag{8.14}
\]

Since \(b\geq1\),

\[
A_\ell\geq2\ell,
\qquad
C_{\ell,-}\geq4\ell-1.
\tag{8.15}
\]

Thus the right-hand side of (8.14), divided by the Jacobian factor
\((1+t)^2\), is summable in \(\ell\), integrable in \(t\geq0\), and uniformly
dominated for \(x\in[1/4,1/2]\). Dominated convergence justifies the image
rearrangement, the first two derivatives, and the interchange of summation
with the target integral, including at \(t=0\).

Define

\[
B_0=\int_0^\infty\frac{f(u)}{u+1}\,du,
\qquad
J(x)=\int_x^1\frac{g'(r)}{r}\,dr,
\tag{8.16}
\]

\[
J_{\ell,\pm}(x)
=\int_{A_\ell}^{\infty}
\frac{f(u)}{u+2\ell\pm1}\,du,
\tag{8.17}
\]

and

\[
V_\ell(x)
=C_{\ell,+}J_{\ell,+}(x)
-C_{\ell,-}J_{\ell,-}(x),
\qquad
E_\ell(x)=J_{\ell,-}(x)+J_{\ell,+}(x).
\tag{8.18}
\]

For \(A,C>0\), integration by parts gives

\[
\int_0^\infty\frac{\chi(A+Ct)}{(1+t)^2}\,dt
=\chi(A)
-C\int_A^\infty
\frac{f(u)}{u+C-A}\,du.
\tag{8.19}
\]

For the two images in (8.13), \(C_{\ell,\pm}-A_\ell=2\ell\pm1\).
Their equal \(\chi(A_\ell)\) terms cancel in (8.19), leaving precisely the
difference \(V_\ell\) in (8.18). In particular, no uncanceled moving-diagonal
boundary term remains. Since
\[
J_{\ell,\pm}'(x)
=\frac{2\ell}{x^2}\,
  \frac{f(A_\ell)}{C_{\ell,\pm}},
\qquad
V_\ell(x)+xV_\ell'(x)=E_\ell(x),
\]
the lower-limit contributions cancel in \(V_\ell'\). Moreover,
\[
E_\ell'(x)
=\frac{8\ell^2f(A_\ell)}{x(4\ell^2-x^2)},
\qquad
J'(x)=-\frac{g'(x)}x.
\]
Direct differentiation, justified by the uniform domination (8.14), now
yields

\[
H(x)
=g(x)
+x\left[
J(x)-B_0
+\sum_{\ell=1}^{\infty}(-1)^{\ell-1}V_\ell(x)
\right],
\tag{8.20}
\]

\[
H'(x)
=J(x)-B_0
+\sum_{\ell=1}^{\infty}(-1)^{\ell-1}E_\ell(x),
\tag{8.21}
\]

\[
H''(x)
=\frac1x
\left[
\sum_{\ell=1}^{\infty}
(-1)^{\ell-1}
\frac{8\ell^2f(2\ell b)}
     {4\ell^2-x^2}
-g'(x)
\right].
\tag{8.22}
\]

The derivative of \(g\) also has the uniformly convergent image expansion

\[
g'(r)
=\sum_{j=0}^{\infty}
(-1)^j
\left[
f(2j+1-r)-f(2j+1+r)
\right].
\tag{8.23}
\]

It follows by expanding \(f\) into exponentials and summing the geometric
image series; absolute uniform convergence holds on any compact
subinterval of \([0,1)\), in particular on the integration intervals used by
the certificate away from the endpoint \(r=1\). At \(r=1\), the resulting
image series remains absolutely convergent because its \(j=0\) term is
finite and its remaining terms decay geometrically.

\hypertarget{explicit-left-image-tails}{%
\subsection{Explicit left-image tails}\label{explicit-left-image-tails}}

Since \(\pi>3\),

\[
f(u)\leq2e^{-3u/2},
\qquad
\chi(u)\leq\frac43e^{-3u/2}.
\tag{8.24}
\]

The integrand form of the difference in (8.13) gives

\[
\begin{aligned}
0\leq V_\ell
&=
\int_0^\infty
\frac{
\chi(A_\ell+C_{\ell,-}t)
-\chi(A_\ell+C_{\ell,+}t)
}{(1+t)^2}\,dt\\
&\leq
2\int_0^\infty
t\,f(A_\ell+C_{\ell,-}t)\,dt\\
&\leq
4e^{-3A_\ell/2}
\int_0^\infty
t\,e^{-3C_{\ell,-}t/2}\,dt\\
&=
\frac{16e^{-3\ell b}}{9C_{\ell,-}^2}
\leq
\frac{8e^{-3\ell b}}{9(4\ell-1)}.
\end{aligned}
\tag{8.25}
\]

Here the factor \(2t\) is the separation of the two image arguments, and
\(C_{\ell,-}=2\ell/x-1\geq4\ell-1\geq2\). This derives the algebra behind
\(V_\ell\) rather than merely quoting its bound. Because

\[
e^{-3b}\leq e^{-3}<\frac18,
\]

summing the geometric majorant yields

\[
\sum_{\ell=L}^{\infty}V_\ell
\leq
\frac{64e^{-3Lb}}{63(4L-1)}.
\tag{8.26}
\]

Similarly, since \(A_\ell+2\ell\pm1\geq4\ell\pm1\),

\[
J_{\ell,\pm}
\leq
\frac1{4\ell\pm1}
\int_{A_\ell}^{\infty}2e^{-3u/2}\,du
=\frac{4e^{-3\ell b}}
       {3(4\ell\pm1)}.
\tag{8.27}
\]

Therefore

\[
\sum_{\ell=L}^{\infty}E_\ell
\leq
\frac{64e^{-3Lb}}
     {21(4L-1)}.
\tag{8.28}
\]

Both left-image tails are explicit, uniform on the complete localization
interval, and compatible with the directed-rational certificate.

\hypertarget{strict-concavity-and-numerical-certification}{%
\subsection{Strict concavity and numerical certification}\label{strict-concavity-and-numerical-certification}}

For \(x\leq1/2\), the positive derivative spectral terms

\[
a_\beta(x)=\frac{2\sinh(\beta x)}{\cosh\beta}
\tag{8.29}
\]

decrease as \(\beta\) runs through \(\beta_j\). Indeed

\[
\frac{\partial}{\partial\beta}\log a_\beta(x)
=x\coth(\beta x)-\tanh\beta.
\tag{8.30}
\]

The first term increases with \(x\), so it is enough to take \(x=1/2\).
Writing \(t=\tanh(\beta/2)\), the desired comparison is equivalent to
\(t^2\geq1/3\). For \(\beta\geq\pi/2\), the elementary estimates
\(\pi>3\) and \(e^{3/2}>4>2+\sqrt3\) prove this inequality.
Consequently the alternating-series estimate gives

\[
g'(x)\geq
D(x)
=\frac{2\sinh(\pi x/2)}{\cosh(\pi/2)}
-\frac{2\sinh(3\pi x/2)}{\cosh(3\pi/2)}.
\tag{8.31}
\]

The certificate verifies the sufficient exact comparison

\[
f(1)>\frac{3f(3)}{f(3/2)},
\tag{8.32}
\]

which implies \(D'(x)>0\) throughout the localization interval. The positive
image terms in (8.22) decrease with \(\ell\), and their alternating sum is
bounded above by its first term

\[
C(x)=\frac{8f(2(1-x)/x)}{4-x^2},
\tag{8.33}
\]

which increases with \(x\). Partition \([1/4,1/2]\) into the four cells

\[
\left[\frac4{16},\frac5{16}\right],
\quad
\left[\frac5{16},\frac6{16}\right],
\quad
\left[\frac6{16},\frac7{16}\right],
\quad
\left[\frac7{16},\frac8{16}\right].
\]

Directed rational interval arithmetic proves on each cell \([a,b]\) that

\[
D(a)-C(b)-\frac b2>0,
\tag{8.34}
\]

with the smallest certified margin enclosed by

\[
0.018974256407
<
\min_{\text{four cells}}
\left(D(a)-C(b)-\frac b2\right)
<
0.018974256408.
\tag{8.35}
\]

Equations (8.22) and (8.34) imply

\[
\boxed{
H''(x)<-\frac12
\quad\text{for every }x\in[1/4,1/2].
}
\tag{8.36}
\]

To control every quadrature error, write \(c=\pi/2<2\) and
\(t=\tanh(cu)\). Direct differentiation gives

\[
\begin{aligned}
f'(u)&=-cf(u)t,\\
f''(u)&=c^2f(u)(2t^2-1),\\
f'''(u)&=c^3f(u)(5t-6t^3),\\
f''''(u)&=c^4f(u)(5-28t^2+24t^4).
\end{aligned}
\tag{8.37}
\]

Thus one may use

\[
|f'|\leq2f,
\qquad
|f''|\leq4f,
\qquad
|f'''|\leq16f,
\qquad
|f''''|\leq80f.
\tag{8.38}
\]

For \(c_0>0\), Leibniz's rule therefore gives

\[
\left|
\left(\frac{f(u)}{u+c_0}\right)^{(4)}
\right|
\leq
f(u)
\left[
\frac{80}{u+c_0}
+\frac{64}{(u+c_0)^2}
+\frac{48}{(u+c_0)^3}
+\frac{48}{(u+c_0)^4}
+\frac{24}{(u+c_0)^5}
\right].
\tag{8.39}
\]

On an interval of width \(w\) split into an even number \(n\) of equal
subintervals, the composite Simpson remainder is bounded by

\[
\frac{w^5}{180n^4}
\sup|f^{(4)}_{\mathrm{integrand}}|.
\tag{8.40}
\]

The truncated image expansion (8.23) has tail bounded by

\[
\frac{16}{7}\,8^{-M},
\tag{8.41}
\]

and integration against \(1/r\) from \(x\) to \(1\) adds at most

\[
\frac{1-x}{x}\frac{16}{7}\,8^{-M}.
\tag{8.42}
\]

The certificate encloses \(\pi\) by Machin's formula

\[
\pi=16\arctan(1/5)-4\arctan(1/239),
\tag{8.43}
\]

using alternating rational arctangent series, and encloses exponentials by
range reduction and alternating rational series. All arithmetic is integer,
rational, or directed fixed-point integer arithmetic; no floating-point
estimate is used as a proof.

It produces

\[
H'(0.363424228)
\in
[0.000000000766586,\ 0.000000000958142],
\tag{8.44}
\]

\[
H'(0.363424229)
\in
[-0.000000000275818,\ -0.000000000084262].
\tag{8.45}
\]

Global localization (8.9), strict concavity (8.36), and these opposite
derivative signs prove that the unique global maximizer \(x_*\) satisfies

\[
\boxed{
0.363424228<x_*<0.363424229.
}
\tag{8.46}
\]

The corresponding certified maximum is

\[
\boxed{
0.57155664213<H(x_*)<0.57155664221.
}
\tag{8.47}
\]

For comparison, deterministic continuum Voronoi allocation against an
endpoint has payoff

\[
F_{\mathrm{Vor}}(x,0)=1-\frac x2
\qquad(0<x\leq1),
\qquad
F_{\mathrm{Vor}}(0,0)=\frac12.
\tag{8.47a}
\]

Its supremum is one as \(x\downarrow0\), whereas the competing-Brownian game
has the unique interior optimum (8.46) and the strictly smaller value
(8.47). The two strategic allocation rules are therefore genuinely
different even on the interval.

Finally, if \(a_N\) is any endpoint best response on \(P_{N+1}\), compactness,
uniform convergence (7.3), and uniqueness of the continuum maximizer imply

\[
\frac{a_N}{N}\longrightarrow x_*.
\tag{8.48}
\]

Neither uniqueness of the finite-path maximizer nor a convergence rate is
claimed.

\appendix
\hypertarget{appendix-a.-complete-multi-hub-product-chain-and-equilibrium-classification}{%
\section{Complete multi-hub product-chain and equilibrium classification}%
\label{app:multihub}%
\label{appendix-a.-complete-multi-hub-product-chain-and-equilibrium-classification}}

\hypertarget{a.1.-automorphism-reduction-and-rate-one-transition-weights}{%
\subsection{Automorphism reduction and rate-one transition weights}\label{a.1.-automorphism-reduction-and-rate-one-transition-weights}}

Graph automorphisms interchange any two hubs and any two leaves, including
leaves belonging to different hubs. Therefore

\[
A_G(u_i,u_j)=0,
\qquad
A_G(\ell,\ell')=0,
\tag{A.1}
\]

and the remaining centered payoff types are

\[
h_{k,r}=A_G(u_i,\ell)
\quad(\ell\in L_i),
\qquad
A_G(u_i,\ell)=-d_{k,r}
\quad(\ell\in L_j,\ j\neq i).
\tag{A.2}
\]

For a hub target, the nonabsorbing single-walk orbit classes are

\[
A=\text{leaves attached to the target hub},
\qquad
B=\text{other hubs},
\qquad
C=\text{their leaves}.
\]

For a leaf target, they are

\[
H=\text{the target's hub},
\quad
K=\text{other hubs},
\quad
A=\text{other leaves at }H,
\quad
B=\text{leaves at }K.
\]

Multiply all single-walk transition probabilities by

\[
\Delta=k+r-1.
\tag{A.3}
\]

A hub-to-neighbor edge then has weight one, whereas a leaf-to-hub transition
has weight \(\Delta\), since the leaf jumps to its unique neighbor with
probability one. Thus the transition weights retain the constant-speed
boundary convention.

For the hub target, the weighted transition lists are

\[
A\longrightarrow\text{target}:\Delta,
\]

\[
B\longrightarrow\text{target}:1,
\qquad
B\longrightarrow B:k-2,
\qquad
B\longrightarrow C:r,
\]

\[
C\longrightarrow B:\Delta.
\tag{A.4}
\]

For the leaf target they are

\[
H\longrightarrow\text{target}:1,
\qquad
H\longrightarrow A:r-1,
\qquad
H\longrightarrow K:k-1,
\]

\[
K\longrightarrow H:1,
\qquad
K\longrightarrow K:k-2,
\qquad
K\longrightarrow B:r,
\]

\[
A\longrightarrow H:\Delta,
\qquad
B\longrightarrow K:\Delta.
\tag{A.5}
\]

At \(r=1\), the leaf-target class \(A\) is formally empty. Retaining it makes
the symbolic polynomial matrices uniform: all transitions into \(A\) have
weight \(r-1=0\), and every physical starting state belongs to the closed
subsystem on \(H,K,B\). Thus the formal extra rows cannot affect a physical
hitting probability or payoff.

Antisymmetry reduces the ordered pair problem to unordered distinct orbit
pairs, while equal-orbit pairs have value \(1/2\). In pair order

\[
(A,B),(A,C),(B,C),
\]

the hub-target system is

\[
M_{\mathrm h}
=\begin{pmatrix}
2(k+2r)&-2r&0\\
-2\Delta&4\Delta&0\\
0&0&2(k+2r)
\end{pmatrix},
\qquad
b_{\mathrm h}
=\begin{pmatrix}
2\Delta\\
2\Delta\\
k+2r+1
\end{pmatrix}.
\tag{A.6}
\]

In pair order

\[
(H,K),(H,A),(H,B),(K,A),(K,B),(A,B),
\]

the leaf-target system is

\[
M_{\ell}
=\begin{pmatrix}
2(k+2r)&0&-2r&2(r-1)&0&0\\
0&4\Delta&0&-2(k-1)&0&0\\
-2\Delta&0&4\Delta&0&-2(k-1)&-2(r-1)\\
2\Delta&-2&0&2(k+2r)&0&2r\\
0&0&-2&0&2(k+2r)&0\\
0&0&-2\Delta&2\Delta&0&4\Delta
\end{pmatrix},
\tag{A.7}
\]

\[
b_{\ell}
=\begin{pmatrix}
k+2r\\
k+2r\\
2\\
2(k+2r-1)\\
k+2r-1\\
2\Delta
\end{pmatrix}.
\tag{A.8}
\]

Let \(q_{\mathrm h}\) and \(q_\ell\) denote the corresponding solutions,
extended by

\[
q(S,S)=\frac12,
\qquad
q(T,S)=1-q(S,T).
\tag{A.9}
\]

Counting target orbits gives the exact centered-payoff identities

\[
\boxed{
\begin{aligned}
h_{k,r}
&=
1+(r-1)q_\ell(H,A)
+(k-1)q_{\mathrm h}(B,C)\\
&\quad
+(k-1)r\,q_\ell(K,B)
-\frac{k(r+1)}2.
\end{aligned}
}
\tag{A.10}
\]

\[
\boxed{
\begin{aligned}
d_{k,r}
&=
1+q_{\mathrm h}(A,B)
+(k-2)q_{\mathrm h}(C,B)\\
&\quad
+(r-1)q_\ell(A,K)
+r\,q_\ell(B,H)
+(k-2)r\,q_\ell(B,K)
-\frac{k(r+1)}2.
\end{aligned}
}
\tag{A.11}
\]

Both formulas include the final subtraction of half the graph order.

\hypertarget{a.2.-exact-bivariate-identities-and-coefficient-positivity}{%
\subsection{Exact bivariate identities and coefficient positivity}\label{a.2.-exact-bivariate-identities-and-coefficient-positivity}}

Set

\[
p=k-2,
\qquad
s=r-2.
\tag{A.12}
\]

For a coefficient-row array \(C=(C_0,C_1,\ldots)\), define

\[
\mathsf P_C(p,s)
=\sum_{i\geq0}\sum_{j\geq0}(C_i)_j\,p^is^j.
\tag{A.13}
\]

The following compact table gives all coefficients needed for the
parameter-uniform formulas; each parenthesized row is ordered by increasing
power of \(s\).

\begin{table}[htbp]
\centering
\caption{Positive coefficient rows for the all-parameter multi-hub formulas.}
\label{tab:multihub-coefficients}
\small
\renewcommand{\arraystretch}{1.08}
\begin{tabular}{@{}ccl@{}}
\toprule
Polynomial & Power of \(p\) & Coefficients in increasing powers of \(s\)\\
\midrule
\(\mathsf L\) & 0 & \((7248,9904,5052,1140,96)\)\\
              & 1 & \((9368,10448,4122,656,32)\)\\
              & 2 & \((4829,4109,1108,92)\)\\
              & 3 & \((1239,713,98)\)\\
              & 4 & \((158,46)\)\\
              & 5 & \((8)\)\\
\addlinespace
\(\mathsf H\) & 0 & \((18480,32872,23136,8068,1396,96)\)\\
              & 1 & \((32552,48080,27104,7154,848,32)\)\\
              & 2 & \((23003,27187,11534,2048,124)\)\\
              & 3 & \((8441,7493,2126,190)\)\\
              & 4 & \((1709,1013,144)\)\\
              & 5 & \((182,54)\)\\
              & 6 & \((8)\)\\
\addlinespace
\(\mathsf D\) & 0 & \((21888,73632,89888,54104,17392,2872,192)\)\\
              & 1 & \((40224,122224,129728,65640,17004,2124,96)\)\\
              & 2 & \((30888,81676,71654,28158,5088,340)\)\\
              & 3 & \((12534,27907,18964,5117,478)\)\\
              & 4 & \((2818,5094,2402,334)\)\\
              & 5 & \((332,464,116)\)\\
              & 6 & \((16,16)\)\\
\bottomrule
\end{tabular}
\end{table}

Exact determinant evaluation in \(\mathbb Z[p,s]\) gives

\[
\det M_{\mathrm h}
=8\Delta(k+2r)(2k+3r),
\tag{A.14}
\]

\[
\det M_{\ell}
=64\Delta\,\mathsf L(p,s).
\tag{A.15}
\]

Cramer's rule in (A.10)--(A.11) gives

\[
h_{k,r}
=\frac{\mathsf H(p,s)}
       {2(k+2r)\mathsf L(p,s)},
\tag{A.16}
\]

\[
d_{k,r}
=\frac{\mathsf D(p,s)}
       {2(k+2r)(2k+3r)\mathsf L(p,s)}.
\tag{A.17}
\]

The polynomials \(\mathsf L,\mathsf H,\mathsf D\) contain, respectively,

\[
20,\qquad27,\qquad34
\tag{A.18}
\]

strictly positive coefficients. Therefore, for every \(k\geq2,r\geq2\),

\[
\mathsf L(p,s)>0,
\qquad
h_{k,r}>0,
\qquad
d_{k,r}>0.
\tag{A.19}
\]

This is a proof on the entire unbounded parameter quadrant, not an inference
from sampled graph sizes.

For \(r=1\), substitution followed by the shift \(p=k-2\) gives

\[
\begin{aligned}
\mathsf L(p,-1)
&=1352+2418p+1736p^2+624p^3+112p^4+8p^5,\\
\mathsf H(p,-1)
&=1976+5238p+5426p^2+2884p^3\\
&\quad+840p^4+128p^5+8p^6,\\
\mathsf D(p,-1)
&=-1248-2936p-2544p^2-1048p^3\\
&\quad-208p^4-16p^5.
\end{aligned}
\tag{A.20}
\]

Consequently

\[
h_{k,1}>0,
\qquad
d_{k,1}<0.
\tag{A.21}
\]

\hypertarget{a.3.-complete-optimal-strategy-classification}{%
\subsection{Complete optimal-strategy classification}\label{a.3.-complete-optimal-strategy-classification}}

For a probability measure \(\mu\), let

\[
x_i=\mu(u_i),
\qquad
y_i=\sum_{\ell\in L_i}\mu(\ell),
\qquad
X=\sum_{i=1}^kx_i,
\qquad
Y=\sum_{i=1}^ky_i,
\qquad
X+Y=1.
\tag{A.22}
\]

When \(r\geq2\), abbreviate \(h=h_{k,r}>0\), \(d=d_{k,r}>0\), and

\[
t_{k,r}=\frac{d}{h+d}.
\tag{A.23}
\]

Testing \(\mu\) against a leaf in \(L_i\) and then against hub \(u_i\)
respectively gives the necessary and sufficient inequalities

\[
(h+d)x_i\geq dX,
\qquad
(h+d)y_i\leq dY,
\qquad
i=1,\ldots,k.
\tag{A.24}
\]

Equivalently,

\[
x_i\geq t_{k,r}X,
\qquad
y_i\leq t_{k,r}Y.
\tag{A.25}
\]

The distribution within each leaf class is arbitrary.

If

\[
kt_{k,r}<1,
\tag{A.26}
\]

summing the second family of (A.25) forces \(Y=0\). Conversely every hub
distribution satisfying the first family is optimal, so

\[
\mathcal O(G_{k,r})
=\left\{
\sum_{i=1}^kx_i\delta_{u_i}:
\sum_{i=1}^kx_i=1,\
x_i\geq t_{k,r}
\text{ for every }i
\right\}.
\tag{A.27}
\]

Every optimal strategy has support exactly \(k\).

If

\[
kt_{k,r}>1,
\tag{A.28}
\]

summing the first family forces \(X=0\), and

\[
\mathcal O(G_{k,r})
=\left\{
\mu\in\mathcal P\left(\bigcup_{i=1}^kL_i\right):
\mu(L_i)\leq t_{k,r}
\text{ for every }i
\right\}.
\tag{A.29}
\]

The exact minimum support is

\[
s_{\min}(k,r)
=\left\lceil\frac1{t_{k,r}}\right\rceil
=\left\lceil\frac{h_{k,r}+d_{k,r}}{d_{k,r}}\right\rceil.
\tag{A.30}
\]

Indeed at least this many distinct leaf classes are required by the upper
bound \(t_{k,r}\); assigning mass to one leaf in each of that many classes
attains it.

If

\[
kt_{k,r}=1,
\tag{A.31}
\]

both sums in (A.25) force equality term by term. The complete optimal set is

\[
\mathcal O(G_{k,r})
=\left\{
\mu:
\mu(u_i)=\frac Xk,\
\mu(L_i)=\frac{1-X}{k}
\text{ for all }i,\
0\leq X\leq1
\right\},
\tag{A.32}
\]

again with arbitrary splitting within each \(L_i\). A strategy with
\(0<X<1\) uses every hub and at least one leaf in every leaf class.

For \(r=1\), (A.21) says that every hub strictly defeats every leaf, while
hubs tie each other. Testing against the hubs excludes all positive leaf mass.
Thus

\[
\mathcal O(G_{k,1})
=\mathcal P(\{u_1,\ldots,u_k\}),
\tag{A.33}
\]

\[
\operatorname{PNE}(G_{k,1})
=\{u_1,\ldots,u_k\}^2.
\tag{A.34}
\]

For \(r\geq2\), every hub loses to leaves attached to other hubs, and every
leaf loses to its own hub. Therefore

\[
\operatorname{PNE}(G_{k,r})=\varnothing
\qquad(r\geq2).
\tag{A.35}
\]

In all parameter regimes,

\[
\operatorname{MNE}(G_{k,r})
=\mathcal O(G_{k,r})\times\mathcal O(G_{k,r}).
\tag{A.36}
\]

\hypertarget{a.4.-two-leaf-specialization-double-star-corollary-and-benchmarks}{%
\subsection{Two-leaf specialization, double-star corollary, and benchmarks}\label{a.4.-two-leaf-specialization-double-star-corollary-and-benchmarks}}

At \(r=2\), exact polynomial specialization gives

\[
\mathsf L(k-2,0)=(k+2)^2W_k,
\tag{A.37}
\]

\[
\mathsf H(k-2,0)=(k+2)^2H_k,
\tag{A.38}
\]

\[
\mathsf D(k-2,0)=2(k+2)^2D_k.
\tag{A.39}
\]

Substitution into (A.16)--(A.17) proves (3.6), while (3.7)--(3.8) imply

\[
kt_{k,2}<1
\quad\text{for every }k\geq2.
\tag{A.40}
\]

Equation (A.27) then proves the linear-support theorem.

The graph \(G_{2,r}\) is a symmetric double star. Let

\[
B_r=24r^3+69r^2+60r+16.
\tag{A.41}
\]

Substitution of \(k=2\) into the all-parameter identities gives

\[
h_{2,r}
=\frac{r(24r^3+85r^2+100r+38)}
       {4(r+1)B_r},
\tag{A.42}
\]

\[
d_{2,r}
=\frac{r(24r^3+23r^2-46r-40)}
       {2(3r+4)B_r}.
\tag{A.43}
\]

For \(r\geq2\), direct subtraction gives

\[
h_{2,r}-d_{2,r}
=\frac{
r(24r^4+257r^3+686r^2+686r+232)
}{
4(r+1)(3r+4)B_r
}
>0,
\tag{A.43a}
\]

so \(2t_{2,r}<1\), and the optimal two-hub interval is

\[
\mathcal O(G_{2,r})
=\left\{
p\delta_{u_1}+(1-p)\delta_{u_2}:
t_{2,r}\leq p\leq1-t_{2,r}
\right\},
\tag{A.44}
\]

and there is no pure equilibrium. At \(r=1\),

\[
G_{2,1}=P_4,
\qquad
h_{2,1}=\frac{19}{104},
\qquad
d_{2,1}=-\frac3{182},
\tag{A.45}
\]

in agreement with (6.8); both central hubs are safe and every hub-supported
mixture is optimal.

Table~\ref{tab:multihub-benchmarks} lists independent exact benchmark instances.

\begin{table}[htbp]
\centering
\caption{Independent exact multi-hub payoff and support benchmarks.}
\label{tab:multihub-benchmarks}
\resizebox{\textwidth}{!}{%
\begin{tabular}{@{}rrr r l r@{}}
\toprule
\(k\) & \(r\) & \(h_{k,r}\) & \(d_{k,r}\)
& Regime & Minimum exact support\\
\midrule
2 & 1 & \(19/104\) & \(-3/182\) & hub simplex & 1\\
2 & 2 & \(385/1812\) & \(19/755\) & all hubs & 2\\
3 & 2 & \(3375/12796\) & \(1087/38388\) & all hubs & 3\\
3 & 10 & \(27949935/109474204\) & \(128790385/985267836\) & leaves & 3\\
4 & 2 & \(2563/8592\) & \(2659/90216\) & all hubs & 4\\
4 & 5 & \(102911/369236\) & \(593869/6369321\) & leaves & 4\\
4 & 16 & \(3190937/12214968\) & \(4981210/32064291\) & leaves & 3\\
5 & 2 & \(559/1728\) & \(17/576\) & all hubs & 5\\
5 & 6 & \(12942633/44657776\) & \(32661159/312604432\) & leaves & 4\\
6 & 2 & \(1071/3124\) & \(1018/35145\) & all hubs & 6\\
\bottomrule
\end{tabular}%
}
\end{table}

These examples illustrate the distinct exact regimes. The parameter-uniform
claims themselves follow from the symbolic determinant identities, positive
shifted coefficients, and optimization argument above.

\hypertarget{appendix-b.-reproducibility-and-proof-versus-regression-boundaries}{%
\section{Computational verification and proof boundaries}%
\label{app:reproducibility}%
\label{appendix-b.-reproducibility-and-proof-versus-regression-boundaries}}

The accompanying supplementary material contains four standalone Python
programs and reproduction instructions. Each program uses only the Python
standard library, requires no network access or computer-algebra system, and
can be executed from its containing directory:
\begin{CertificateOutput}
python certify_endpoint_optimizer.py
python continuum_endpoint_uniqueness_certificate.py
python finite_path_aggregate_certificate.py
python multi_hub_support_certificate.py
\end{CertificateOutput}

The evidentiary roles of their computations are distinct.

\noindent\textbf{Parameter-uniform analytic arguments.}
The reflecting-boundary maximum principles, folding couplings, killed-walk
reflection, stopped product martingale, aggregate rectangle inequalities,
first-passage invariance principle, continuity and compactness argument, and
image-series domination are mathematical proofs valid for all stated
parameters. Finite numerical sweeps are not used to establish any of these
infinite families.

\noindent\textbf{Parameter-uniform symbolic identities.}
The finite-path and multi-hub programs implement exact integer polynomial
arithmetic and verify identities directly in polynomial rings. These include
the shifted positive-coefficient expansions in (5.26) and (5.32)--(5.35), the
two multi-hub determinant and Cramer-rule formulas, all coefficient
inventories, and the symbolic specializations to one leaf, two leaves, and
two hubs. Equality of finite coefficient arrays is an exact proof of each
polynomial identity for every parameter value.

\noindent\textbf{Theorem-critical finite exceptions.}
The three exceptional parameter pairs are \((a,d)=(1,2),(1,3),(2,2)\).
Only \((2,2)\) requires the exceptional explicitly solved Dirichlet rectangle,
certified by exact substitution into all 25 interior equations and giving
\(J_{4,2,1}=31/495\). The twelve rational
bridge inequalities are checked separately for \(12\leq m\leq23\); their
representative minimum \(B_{12}>0\) appears in (6.7). For
\(2\leq m\leq11\), exact product-chain solutions and back-substitution verify
all twenty endpoint comparisons for the two even-path centers. These are
explicit finite components of the proof, rather than evidence extrapolated
to other parameters.

\noindent\textbf{Certified numerical enclosures.}
The global-localization program proves (8.6)--(8.9) using rational spectral
enclosures and target-cell comparisons. The endpoint-uniqueness program uses
Machin's formula, directed integer arithmetic, the explicit image tails, and
composite Simpson remainder bounds to certify (8.34)--(8.47). Its output
therefore certifies rigorous intervals, not floating-point estimates.

\noindent\textbf{Finite regression sweeps.}
Additional killed-walk, rectangle, reflecting-boundary, lumped-chain, and
full-product-chain checks are reproducibility and implementation tests only.
In particular, the 630 lumped multi-hub comparisons and 344 full-vertex
comparisons do not prove parameter-uniform statements; those follow from the
symbolic identities and positivity arguments above. Full execution output and
integrity information belong to the accompanying reproducibility supplement,
not to the mathematical argument.

\bibliographystyle{plainnat}
\bibliography{strategic_competing_walks_references}

\end{document}